\newtheorem{theorem}{Theorem}[section]
\newtheorem{lemma}[theorem]{Lemma}
\newtheorem{remark}[theorem]{Remark}
\newtheorem{proposition}[theorem]{Proposition}
\newtheorem{corollary}[theorem]{Corollary}
\theoremstyle{definition}
\newtheorem{definition}[theorem]{Definition}
\newtheorem{example}[theorem]{Example}
\begin{document}
	
	\setcounter{page}{1}
	
	\title[$K-$frames for Super Hilbert Spaces]{$K-$frames for Super Hilbert Spaces}

	\author[N. Khachiaa]{Najib Khachiaa$^*$}
	
	\address{Department of Mathematics Faculty of Sciences, University of Ibn Tofail, B.P. 133, Kenitra, Morocco}
	\email{\textcolor[rgb]{0.00,0.00,0.84}{khachiaa.najib@uit.ac.ma}}
	\date{
		\newline \indent $^{*}$ Corresponding author}
	\subjclass[2020]{42C15; 46L05}
	
	\keywords{Frame, $K$-frame, Hilbert spaces, Super Hilbert Spaces.}
	
	\begin{abstract}
		Let $H_1$ and $H_2$ be two Hilbert spaces, $K$ and $L$ be bounded operatrors on $H_1$ and $H_2$ respectively. In this paper we study the relationship between $K$-frames for $H_1$ and $L$-frames for $H_2$ and $K\oplus L$-frames for $H_1\oplus H_2$. The $K\oplus L$-minimal frames and  $K\oplus L$-orthonormal bases  for $H_1\oplus H_2$ are also studied.
	\end{abstract}
	\maketitle
	
	\baselineskip=12.4pt

	\section{Introduction and preliminaries}
Frames in Hilbert spaces were introduced by Duffin and Schaeffer in 1952 \cite{6}, in the context of non-harmonic Fourier series. After a few years, in 1986, the frames were brought back to life by Daubechies, Grossman and Meyer \cite{5}.
Frames have interesting properties that make them useful tools in signal processing, image processing, coding theory, sampling theory and much more. Recently, the theory of frames has undergone several generalizations: g-frames, k-frames. In this work, we will study the theory of K-frames in super Hilbert spaces.\\  \\

In this paper $H_1$ and $H_2$ are separable Hilbert spaces. $H_1\oplus H_2$ is the direct sum  of Hilbert spaces $H_1$ and $H_2$. $H_1\oplus H_2$ equipped with the inner product $<x\oplus y, a\oplus b>\\:=<x,a>_{H_1}+<y,b>_{H_2}$, for all $x,a\in H_1$ and $y,b\in H_2$, is clearly a Hilbert space which we call the super Hilbert space of $H_1$ and $H_2$. $B(H_1,H_2)$ denote the collection of all bounded linear operators from $H_1$ in $H_2$. For a Hilbert space $H$, $B(H)$ denote the algebra of all bounded linear operators on $H$. For $K\in B(H_1)$ and $L\in B(H_2)$, we denote by $K\oplus L$ the bounded linear operator on $H_1\oplus H_2$ defined  for all $x\in H_1$ and $y\in H_2$ by: $K\oplus L(x\oplus y)=K(x)\oplus L(y)$. For an operator $T$, $R(T)$ and $N(T)$ denote the range and the kernel, respectively, of $T$.\\ 
In what follows, without any possible confusion,  all inner products are denoted by the same notation $\langle.,.\rangle$ and also all norms are denoted by the same notation $\parallel.\parallel$.\\ \\
The following is one of the most useful theorems in $K$-frame theory which known by Douglas theorem:
\begin{theorem}\cite{4}
Let $K\in B(H_1,H)$, $L\in B(H_2,H)$, the following statements are equivalent:
\begin{enumerate}
\item[i.] $R(K)\subset R(L)$.
\item[ii.] There exists a constant $c>0$ such that $KK^*\leq cLL^*$.
\item[iii.] $K=LX$ for some $X\in B(H_1,H_2)$.\\
\end{enumerate}
\end{theorem}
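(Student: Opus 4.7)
The plan is to establish the two non-trivial implications $(ii)\Rightarrow(iii)$ and $(i)\Rightarrow(iii)$, after disposing of the easy directions. The implication $(iii)\Rightarrow(i)$ is immediate: $K=LX$ gives $Kx=L(Xx)\in R(L)$ for every $x\in H_1$. For $(iii)\Rightarrow(ii)$, writing $KK^*=LXX^*L^*$ and using $XX^*\leq\|X\|^2 I_{H_2}$ yields $KK^*\leq\|X\|^2 LL^*$, so $c=\|X\|^2$ works. The substantive content is therefore in the two converse implications.

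For $(ii)\Rightarrow(iii)$, the key is that $KK^*\leq cLL^*$ is equivalent, by pairing with $h\in H$, to the pointwise estimate $\|K^*h\|^2\leq c\|L^*h\|^2$ for every $h\in H$. This makes the assignment $L^*h\mapsto K^*h$ well-defined and Lipschitz with constant $\sqrt{c}$ on the subspace $R(L^*)\subset H_2$. I would define $T$ on $R(L^*)$ by $T(L^*h):=K^*h$, extend by continuity to $\overline{R(L^*)}=N(L)^\perp$, and by $0$ on $N(L)$, obtaining $T\in B(H_2,H_1)$ with $\|T\|\leq\sqrt{c}$. Setting $X:=T^*$, the calculation $\langle LT^*x,h\rangle=\langle T^*x,L^*h\rangle=\langle x,TL^*h\rangle=\langle x,K^*h\rangle=\langle Kx,h\rangle$ for all $x\in H_1$, $h\in H$ gives $LX=K$.

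The main obstacle is $(i)\Rightarrow(iii)$, because defining $X$ pointwise by choosing an arbitrary preimage of $Kx$ under $L$ produces a linear map whose boundedness is not evident when $R(L)$ fails to be closed. My approach is to pass to a quotient and invoke the closed graph theorem. Let $\bar L\colon H_2/N(L)\to H$ be the bounded injective operator induced by $L$, and define $\tilde X\colon H_1\to H_2/N(L)$ by requiring $\bar L\,\tilde X(x)=Kx$; this is possible by $(i)$ and is single-valued by injectivity of $\bar L$. If $x_n\to x$ in $H_1$ and $\tilde X(x_n)\to\xi$ in $H_2/N(L)$, continuity of $\bar L$ gives $\bar L\xi=\lim Kx_n=Kx=\bar L\,\tilde X(x)$, and injectivity forces $\xi=\tilde X(x)$. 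The closed graph theorem then makes $\tilde X$ bounded, and composing with the canonical isometry $H_2/N(L)\cong N(L)^\perp\hookrightarrow H_2$ yields the required $X\in B(H_1,H_2)$ with $K=LX$.

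The four implications above close the loop and deliver equivalence of $(i)$, $(ii)$, $(iii)$. The delicate step throughout is that $R(L)$ need not be closed, which is exactly why $(i)\Rightarrow(iii)$ admits no explicit inversion formula and must route through the closed graph theorem applied to the injective quotient operator $\bar L$.
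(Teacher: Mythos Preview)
The paper does not supply its own proof of this theorem: it is quoted as a preliminary result with a citation to Douglas's original paper, so there is no in-paper argument to compare against. Your proof is correct and is essentially the standard one. The construction in $(ii)\Rightarrow(iii)$, defining $T$ on $R(L^*)$ by $T(L^*h)=K^*h$ and extending, is precisely Douglas's argument; your handling of $(i)\Rightarrow(iii)$ via the closed graph theorem on the quotient $H_2/N(L)$ is also a well-known route (an alternative is to deduce $(i)\Rightarrow(ii)$ directly via the closed graph theorem applied to $L^*h\mapsto K^*h$, which avoids the quotient but is logically equivalent). One cosmetic point: in $(iii)\Rightarrow(ii)$ you set $c=\|X\|^2$, which vanishes when $X=0$; in that case $K=0$ and any $c>0$ works, so the statement still holds, but you may wish to note the trivial case separately. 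The phrase ``close the loop'' is slightly inaccurate since your four implications form a star centered at $(iii)$ rather than a cycle, but they do establish the full equivalence.
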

\begin{definition}\cite{3}
A sequence $\{x_n\}_{n\geqslant}$ in $H$ is called to be Bessel sequence in  $H$ if there exist a constant $B>0$ such that for all $x\in H$,  $$\displaystyle{\sum_{n=1}^{+\infty}\vert \langle x,x_n\rangle\vert^2}\leq B\|x\|^2.$$
\end{definition}
The following operators are well defined and bounded whenever the sequence is a Bessel one.
\begin{definition}\cite{3}
Let $\{x_n\}_{n\geqslant 1}$ be a Bessel sequence.
\begin{enumerate}
\item[i.] The synthesis operator (or the pre-frame operator) of $\{x_n\}_{n\geqslant 1}$, denoted by $T$, is the bounded operator defined by $$T:\begin{array}{rcl}
\ell^2&\longrightarrow& H\\
\{a_n\}_{n\geqslant 1}&\mapsto& \displaystyle{\sum_{n=1}^{+\infty}a_n x_n}.
\end{array}$$
\item[ii.] The analysis operator ( or the frame transforme) of $\{x_n\}_{n\geqslant 1}$ is the adjoint of its synthesis operator. It is  often denoted by $\theta$. It is  defined explicitly by:  $$\theta:\begin{array}{rcl}
H&\longrightarrow& \ell^2\\
x&\mapsto& \{\langle x,x_n\rangle\}_{n\geqslant 1}.
\end{array}$$
\item[iii.] The frame operator, denoted by $S$, is the the composite of $T$ and $\theta$. It is defined  explicitly by:
 $$S:\begin{array}{rcl}
H&\longrightarrow& H\\
x&\mapsto& \displaystyle{\sum_{n=1}^{+\infty}\langle x, x_n\rangle x_n}.
\end{array}$$\\
\end{enumerate}
\end{definition}
\begin{definition}\cite{3}
A sequence $\{x_n\}_{n\geqslant}$ in $H$ is said to be Frame for $H$ if there exist two constants $A,B>0$ such that for all $x\in H$,  $$A\|x\|^2\leq \displaystyle{\sum_{n=1}^{+\infty}\vert \langle x,x_n\rangle \vert^2}\leq B\|x\|^2.$$
If $A=B$, we say that $\{x_n\}_{n\geqslant 1}$ is a tight frame, if $A=B=1$ the frame is called Parseval frame.\\
\end{definition}
\begin{definition}\cite{7}
Let $K\in B(H)$. A sequence $\{x_n\}_{n\geqslant}$ in $H$ is said  to be $K$-Frame for $H$ if there exist two constants $A,B>0$ such that for all $x\in H$,  $$A\|K^*x\|^2\leq \displaystyle{\sum_{n=1}^{+\infty}\vert \langle x,x_n\rangle \vert^2}\leq B\|x\|^2.$$
If there exists $A> 0$ such that $\{x_n\}_{n\geqslant 1}$ satisfies $A\| K^*x\|^2=\displaystyle{\sum_{n=1}^{+\infty}\vert \langle x,x_n\rangle \vert^2}$ for all $x\in H$, we say that $\{x_n\}_{n\geqslant1}$ is a tight $K$-frame for $H$. If $A=1$, $\{x_n\}_{n\geqslant1}$ is called Parseval $K$-frame.\\
\end{definition}
\begin{example}
Let $\{e_n\}_{n\geqslant 1}$ be an orthonormal basis for $H$. Define $K\in B(H)$ such that for all $n\geqslant 1$,  $K(e_n):=e_{n+1}$. $\{K(e_n)\}_{n\geqslant 1}:=\{e_n\}_{n\geqslant 2}$ is not a frame ( it is not a complete sequence). \\
It is clear that for all $x\in H$, $K^*(x)=\displaystyle{\sum_{n=1}^{\infty}\langle x,K(e_n)\rangle e_n}$, then for all $x\in H$, \\$\|K^*(x)\|^2=\displaystyle{\sum_{n=1}^{+\infty}\vert\langle x,K(e_n)\rangle \vert^2}$. Hence $\{K(e_n)\}_{n\geqslant1}=\{e_n\}_{n\geqslant 2}$ is a Parseval $K$-frame for $H$.\\
\end{example}
\begin{remark}\hspace{0.2cm}
\begin{enumerate}
\item[i. ] If $K=I$, then a $K$-frame is just an ordinary frame.
\item[ii. ] An ordinary frame for $H$ is a $K$-frame whatever $K\in B(H)$.
\item[iii.] Let $K\in B(H)$. Every $K$-frame is , in particular, a Bessel sequence, then the synthesis, analysis and frame operators are well defined and bounded. Unlike to the frame case, The synthesis oprator is not surjective, the analysis operator is not injective and the frame operator is not invertible.\\
\end{enumerate}
\end{remark}
The following three results are characterizations of $K$-frames in Hilbert spaces.
\begin{proposition}\cite{11}
Let $\{x_n\}_{n\geqslant1}$ be a Bessel sequence for $H$ whose the frame operator is $S$. Let $K\in B(H)$. The following statements are equivalent:
\begin{enumerate}
\item[i.] $\{x_n\}$ is a $K$-frame.
\item[ii.] There exists a constant $A>0$ such that $AKK^*\leq S$.\\
\end{enumerate}
\end{proposition}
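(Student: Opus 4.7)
The plan is to reduce both inequalities to quadratic-form identities and read the equivalence off directly. The crucial observation is that for a Bessel sequence, the frame operator $S=T\theta$ satisfies
\[
\langle Sx,x\rangle \;=\;\langle T\theta x,x\rangle\;=\;\langle \theta x,\theta x\rangle\;=\;\sum_{n=1}^{+\infty}|\langle x,x_n\rangle|^2,
\]
for every $x\in H$. Once this identity is in hand, the $K$-frame inequality
\[
A\|K^*x\|^2 \leq \sum_{n=1}^{+\infty}|\langle x,x_n\rangle|^2
\]
translates verbatim into $A\langle KK^*x,x\rangle \leq \langle Sx,x\rangle$, since $\|K^*x\|^2 = \langle KK^*x,x\rangle$.

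First I would verify the quadratic-form identity above; this is standard but worth writing out, as it is the only analytic ingredient. Next, for the implication (i)$\Rightarrow$(ii), I would assume $\{x_n\}$ is a $K$-frame with lower bound $A>0$, apply the identity to rewrite both sides of the lower frame inequality as quadratic forms, and conclude $AKK^*\leq S$ by definition of operator order on self-adjoint operators (both $KK^*$ and $S$ are positive, hence self-adjoint). For the converse (ii)$\Rightarrow$(i), I would start from $AKK^*\leq S$, evaluate both sides at an arbitrary $x$, and invoke the same identity to recover the lower $K$-frame bound; the upper bound is provided for free by the Bessel hypothesis.

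There is essentially no obstacle; the only point requiring a small comment is that being a Bessel sequence is assumed in the statement, so one does not need to derive the upper bound from (ii). I would mention this explicitly so the reader sees why (ii) gives the full $K$-frame inequality and not merely the lower bound.
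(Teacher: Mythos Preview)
Your argument is correct: the identity $\langle Sx,x\rangle=\sum_{n\geq 1}|\langle x,x_n\rangle|^2$ together with $\|K^*x\|^2=\langle KK^*x,x\rangle$ reduces the equivalence to the definition of the operator order, and the Bessel hypothesis supplies the upper bound. Note, however, that the paper does not give its own proof of this proposition; it is quoted from \cite{11} as a preliminary result, so there is no in-paper argument to compare against.
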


\begin{proposition}\cite{7}\label{Prop1.9}
Let $\{x_n\}_{n\geqslant1}$ be a Bessel sequence for $H$ whose  the synthesis operator is $T$. Let $K\in B(H)$. The following statements are equivalent:
\begin{enumerate}
\item[i.] $\{x_n\}$ is a $K$-frame.
\item[ii.] $R(K)\subset R(T)$.\\
\end{enumerate}
\end{proposition}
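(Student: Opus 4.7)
The plan is to translate the $K$-frame inequality into an operator inequality involving the synthesis operator $T$, and then invoke Douglas' theorem (Theorem 1.1) directly. The key observation is that the analysis operator $\theta$ is precisely $T^{*}$, so the middle term of the $K$-frame inequality is nothing but $\|T^{*}x\|^{2}$; hence the whole question becomes a purely operator-theoretic comparison between $KK^{*}$ and $TT^{*}$.

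More concretely, I would proceed as follows. First, since $\{x_{n}\}$ is assumed Bessel, the synthesis operator $T\colon \ell^{2}\to H$ is well defined and bounded, and $T^{*}x = \{\langle x,x_{n}\rangle\}_{n\geqslant 1}$, so $\sum_{n\geqslant 1}|\langle x,x_{n}\rangle|^{2} = \|T^{*}x\|^{2}$ for every $x\in H$. Using this, the $K$-frame lower bound $A\|K^{*}x\|^{2}\leq \sum_{n}|\langle x,x_{n}\rangle|^{2}$ is, for any fixed $A>0$, equivalent to the operator inequality $AKK^{*}\leq TT^{*}$, because for self-adjoint positive operators such pointwise quadratic-form inequalities are equivalent to the corresponding Loewner-order inequality. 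The upper bound in the $K$-frame definition is already guaranteed by the Bessel hypothesis, so the $K$-frame condition reduces exactly to the existence of some $A>0$ with $AKK^{*}\leq TT^{*}$.

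At this point Douglas' theorem, applied with $L=T$, finishes both implications at once: the existence of a constant $c>0$ with $KK^{*}\leq cTT^{*}$ is equivalent to $R(K)\subset R(T)$. For the implication (i)$\Rightarrow$(ii), take $c=1/A$; for (ii)$\Rightarrow$(i), take $A=1/c$ and combine the resulting lower bound with the Bessel upper bound.

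There is no real obstacle in this argument; the only point that needs a moment of care is the passage between the pointwise inequality $A\|K^{*}x\|^{2}\leq \|T^{*}x\|^{2}$ and the operator inequality $AKK^{*}\leq TT^{*}$, which is justified by writing $\|K^{*}x\|^{2}=\langle KK^{*}x,x\rangle$ and similarly for $T$. Once that is recorded, the proof is essentially the single step of invoking Theorem 1.1.
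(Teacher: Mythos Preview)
Your argument is correct. Note, however, that the present paper does not supply its own proof of this proposition: it is quoted from G\u{a}vru\c{t}a \cite{7} without proof. Your route via Douglas' theorem (Theorem~1.1) is exactly the standard one used in that reference: writing $\sum_{n}|\langle x,x_{n}\rangle|^{2}=\|T^{*}x\|^{2}=\langle TT^{*}x,x\rangle$ and $\|K^{*}x\|^{2}=\langle KK^{*}x,x\rangle$ turns the $K$-frame lower bound into the operator inequality $AKK^{*}\le TT^{*}$, and Theorem~1.1 then gives the equivalence with $R(K)\subset R(T)$. There is nothing to compare against here beyond confirming that your proof is the intended one.
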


\begin{proposition}\cite{7}
Let $\{x_n\}_{n\geqslant1}$ be a Bessel sequence for $H$. Let $K\in B(H)$. The following statements are equivalent:
\begin{enumerate}
\item[i.] $\{x_n\}$ is a $K$-frame.
\item[ii.] There exists a Bessel sequence $\{f_n\}_{n\geqslant1}$ in $H$ such that for all $x\in H$, $$Kx=\displaystyle{\sum_{n=1}^{+\infty}\langle x,f_n\rangle x_n}.$$
Such a Bessel sequence is called a $K$-dual frame to $\{x_n\}_{n\geqslant1}$.\\
\end{enumerate}
\end{proposition}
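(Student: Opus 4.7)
The plan is to factor the equation $Kx=\sum \langle x,f_n\rangle x_n$ through the synthesis and analysis operators, and then invoke Douglas' theorem together with Proposition \ref{Prop1.9}.

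First I would handle the easier direction (ii)$\Rightarrow$(i). Let $T$ be the synthesis operator of $\{x_n\}_{n\geqslant 1}$ and, given a K-dual Bessel sequence $\{f_n\}_{n\geqslant 1}$, let $\theta_f$ denote its analysis operator. The hypothesis rewrites as $Kx=T(\{\langle x,f_n\rangle\}_{n\geqslant 1})=T\theta_f x$ for every $x\in H$, so $K=T\theta_f$. This immediately gives $R(K)\subset R(T)$, and Proposition \ref{Prop1.9} concludes that $\{x_n\}_{n\geqslant 1}$ is a $K$-frame.

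For the converse (i)$\Rightarrow$(ii), the starting point is again Proposition \ref{Prop1.9}: being a $K$-frame yields $R(K)\subset R(T)$. Douglas' theorem (Theorem 1.1) then produces an operator $X\in B(H,\ell^2)$ with $K=TX$. To extract the sequence $\{f_n\}$, I would write $Xx=\{c_n(x)\}_{n\geqslant 1}\in\ell^2$ and note that each coordinate functional $x\mapsto c_n(x)$ is a bounded linear form on $H$; by the Riesz representation theorem there exists $f_n\in H$ with $c_n(x)=\langle x,f_n\rangle$.

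It remains to verify that $\{f_n\}_{n\geqslant 1}$ is Bessel and that the reconstruction formula holds, both of which are essentially immediate: $\sum_{n=1}^{+\infty}|\langle x,f_n\rangle|^2=\|Xx\|^2\leq \|X\|^2\|x\|^2$ gives the Bessel bound $\|X\|^2$, and applying $T$ to $Xx$ gives $Kx=TXx=\sum_{n=1}^{+\infty}\langle x,f_n\rangle x_n$. No step here presents a genuine obstacle; the only subtlety worth naming is the identification of the components of $Xx$ with inner products against fixed vectors $f_n$, which is where Riesz representation does the real work of turning an abstract operator $X$ into a concrete Bessel sequence.
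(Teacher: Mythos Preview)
Your proof is correct. Note, however, that the paper does not supply its own proof of this proposition: it is quoted as a known result from \cite{7} (Găvruţa, \emph{Frames for operators}), so there is no in-paper argument to compare against. Your approach---factoring $K=T\theta_f$ for (ii)$\Rightarrow$(i) and invoking Douglas' theorem together with Proposition~\ref{Prop1.9} for (i)$\Rightarrow$(ii), then recovering $f_n$ via Riesz representation on the coordinate functionals of $X$---is exactly the standard route used in the source paper. A slightly slicker way to produce the $f_n$ is to set $f_n:=X^*\delta_n$ directly, where $\{\delta_n\}$ is the canonical basis of $\ell^2$; this makes the Bessel property and the identity $\langle x,f_n\rangle=\langle Xx,\delta_n\rangle$ immediate without separately invoking Riesz representation, but it is the same idea.
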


\begin{remark}
Let $\{x_n\}_{n\geqslant1}$ be a $K$-frame and $\{f_n\}$ be a $K$-dual to $\{x_n\}_{n\geqslant1}$.
\begin{enumerate}
\item[i.] For all $x\in H$, $K^*x=\displaystyle{\sum_{n=1}^{+\infty}\langle x,x_n\rangle f_n}$. Which means that $\{f_n\}_{n\geqslant}$ is a $K^*$-frame.
\item[ii.] $\{x_n\}_{n\geqslant1}$ and $\{f_n\}_{n\geqslant1}$ are interchangeable if and only if $K$ is self-adjoint.\\
\end{enumerate}
\end{remark}
\begin{proposition}\cite{11}\label{Prop1.12}
Let $\{x_n\}_{n\geqslant1}$ be a frame for $H$ and $K\in B(H)$. Then $\{Kx_n\}_{n\geqslant1}$ is a $K$-frame for $H$. \\
\end{proposition}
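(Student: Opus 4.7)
The plan is to verify the $K$-frame inequalities directly by shifting $K$ to the other side of the inner product, and then observing that the hypothesis on $\{x_n\}$ immediately delivers what is needed.

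First I would record the key algebraic identity: for every $x\in H$ and every $n\geq 1$,
\[
\langle x,Kx_n\rangle \;=\; \langle K^{*}x,x_n\rangle,
\]
so that
\[
\sum_{n=1}^{+\infty}\bigl|\langle x,Kx_n\rangle\bigr|^{2}
\;=\;
\sum_{n=1}^{+\infty}\bigl|\langle K^{*}x,x_n\rangle\bigr|^{2}.
\]
This reduces both frame-style inequalities for $\{Kx_n\}$ to the corresponding ones for $\{x_n\}$ evaluated at $K^{*}x$.

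Next I would apply the frame condition on $\{x_n\}$ with bounds $A,B>0$ to $K^{*}x$. The lower bound gives $A\|K^{*}x\|^{2}\leq \sum |\langle K^{*}x,x_n\rangle|^{2}$, which is exactly the required $K$-frame lower bound with constant $A$. For the upper bound, the frame inequality yields $\sum |\langle K^{*}x,x_n\rangle|^{2}\leq B\|K^{*}x\|^{2}$, and then the operator bound $\|K^{*}x\|\leq \|K\|\,\|x\|$ upgrades this to $B\|K\|^{2}\|x\|^{2}$, giving the Bessel-type upper bound with constant $B\|K\|^{2}$ (which we may assume positive, as the case $K=0$ is trivial).

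I do not expect any serious obstacle here: the argument is essentially a one-line adjoint manipulation combined with the definition of a $K$-frame, and yields the explicit bounds $A$ and $B\|K\|^{2}$. An alternative, slightly more conceptual route would be to use Proposition \ref{Prop1.9}: the synthesis operator of $\{Kx_n\}$ factors as $KT$, where $T$ is the synthesis operator of $\{x_n\}$, and since $T$ is surjective one gets $R(KT)=R(K)$, so the range inclusion $R(K)\subset R(KT)$ is automatic; but one must still check the Bessel condition, which is precisely the upper bound computation above, so the direct approach is the most economical.
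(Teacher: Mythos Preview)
Your argument is correct and is the standard direct verification: the adjoint identity $\langle x,Kx_n\rangle=\langle K^*x,x_n\rangle$ reduces everything to the frame inequality for $\{x_n\}$ applied at $K^*x$, and the upper bound follows from $\|K^*x\|\leq\|K\|\,\|x\|$. The paper itself does not supply a proof of this proposition; it is quoted from \cite{11} without argument, so there is no in-paper proof to compare against. Your direct computation is exactly what one would expect (and is essentially the proof in the cited reference), and your remark about the alternative route via Proposition~\ref{Prop1.9} is also valid.
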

\begin{definition}\cite{1}
Let $K\in B(H)$. A $K$-frame for $H$ is said to be $K$-minimal frame if its synthesis operator is injective.\\ 
\end{definition}
\begin{remark}
A $K$-minimal frame does not contain zeros. In other words: \\If $\{x_n\}_{n\geqslant 1}$ is a $K$-minimal frame, then for all $n\geqslant 1$, $x_n\neq 0$.\\
\end{remark}
\begin{proposition}\cite{1}
Let $K\in B(H)$ and $\{x_n\}_{n\geqslant1}$ a $K$-frame for $H$. Then the following statements are equivalent:
\begin{enumerate}
\item[i.] $\{x_n\}_{n\geqslant1}$ has a unique $K$-dual frame.
\item[ii.] $\{x_n\}_{n\geqslant1}$ is $K$-minimal frame.\\
\end{enumerate}
\end{proposition}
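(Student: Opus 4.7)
The plan is to translate the statement about $K$-dual frames into a statement about operators $V\in B(H,\ell^2)$ satisfying $TV=K$, where $T$ is the synthesis operator of $\{x_n\}_{n\geq 1}$, and then reduce the uniqueness question to the injectivity of $T$.

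First I would set up the dictionary between Bessel sequences $\{f_n\}_{n\geq 1}$ in $H$ and bounded operators from $H$ to $\ell^2$: the analysis operator $\theta_f:x\mapsto\{\langle x,f_n\rangle\}$ is bounded exactly when $\{f_n\}$ is Bessel, and conversely, given any $V\in B(H,\ell^2)$, the sequence $f_n:=V^*e_n$ is Bessel (with Bessel bound $\|V\|^2$), and its analysis operator is exactly $V$. Under this correspondence, the defining identity $Kx=\sum\langle x,f_n\rangle x_n$ for a $K$-dual becomes $K=T\theta_f$. So the set of $K$-dual frames of $\{x_n\}_{n\geq 1}$ is in bijection with the set $\mathcal{V}:=\{V\in B(H,\ell^2):TV=K\}$, and two $K$-duals coincide iff the corresponding $V$'s coincide (since $\{e_n\}$ is total in $\ell^2$).

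Next I would show equivalence of uniqueness in $\mathcal{V}$ with injectivity of $T$. Existence of at least one element of $\mathcal{V}$ is guaranteed because $\{x_n\}_{n\geq 1}$ is assumed to be a $K$-frame (so by Proposition~1.10 some $K$-dual exists). For the implication (ii)$\Rightarrow$(i), if $TV_1=TV_2=K$, then $T(V_1-V_2)=0$, so $R(V_1-V_2)\subset N(T)$; if $T$ is injective this forces $V_1=V_2$.

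For the converse (i)$\Rightarrow$(ii), I would argue by contrapositive: suppose $T$ is not injective and pick $0\neq a\in N(T)$. Choose any nonzero bounded linear functional $\varphi\in H^*$ (for example $\varphi(x)=\langle x,x_0\rangle$ for some $x_0\neq 0$) and define $W\in B(H,\ell^2)$ by $Wx:=\varphi(x)\,a$. Then $TW=\varphi(\cdot)\,Ta=0$, so for any fixed $V\in\mathcal{V}$ we have $V+W\in\mathcal{V}$, and $W\neq 0$ implies $W^*\neq 0$, hence $W^*e_n\neq 0$ for some $n$; thus the induced $K$-duals $\{V^*e_n\}$ and $\{(V+W)^*e_n\}$ differ, contradicting uniqueness. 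The only step requiring a little care is producing the nonzero $W$ with range in $N(T)$; the rank-one construction above handles this cleanly and is the main (though modest) obstacle in the argument.
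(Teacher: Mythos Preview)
The paper does not supply a proof of this proposition; it is quoted without proof from reference~\cite{1} in the preliminaries section, so there is nothing in the paper to compare your argument against.

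Your argument is correct. The bijection between Bessel sequences $\{f_n\}$ and operators $V\in B(H,\ell^2)$ via $V=\theta_f$, $f_n=V^*e_n$, is standard, and it translates the $K$-dual condition exactly into $TV=K$. Given that at least one solution exists (by the cited characterisation of $K$-frames via $K$-duals), the implication (ii)$\Rightarrow$(i) is immediate from injectivity of $T$. For (i)$\Rightarrow$(ii) your rank-one perturbation $Wx=\langle x,x_0\rangle\,a$ with $0\neq a\in N(T)$ and $0\neq x_0\in H$ is bounded, satisfies $TW=0$, and is nonzero, so $V$ and $V+W$ are distinct solutions; since $\{e_n\}$ is total in $\ell^2$, $W^*e_n\neq 0$ for some $n$, so the corresponding dual sequences $\{V^*e_n\}$ and $\{(V+W)^*e_n\}$ genuinely differ. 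This is precisely the kind of argument used in the source~\cite{1}, so your approach is the expected one.
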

\begin{definition}\cite{1}
Let $K\in B(H)$ and $\{x_n\}_{n\geqslant}$ a sequence in $H$. $\{x_n\}_{n\geqslant1}$ is said to be $K$-orthonormal basis if: \begin{enumerate}
\item[i.] $\{x_n\}_{n\geqslant1}$ is an orthonormal system in $H$.
\item[ii.] $\{x_n\}_{n\geqslant1}$ is a Parseval $K$-frame.\\
\end{enumerate}
\end{definition}
\begin{theorem}\cite{1}
Let $K\in B(H)$ be an isometry and $\{u_n\}_{n\geqslant1}$ be an orthonormal basis for $H$. Then the following statements are equivalent: 
\begin{enumerate}
\item[i.] $\{x_n\}_{n\geqslant1}$ is a $K$-orthonormal basis for $H$.
\item[ii.] There exists an isometry $L\in B(H)$ such that $R(L)=R(K)$ and $\forall n\geqslant1$, $x_n=Tu_n$.\\
\end{enumerate}
\end{theorem}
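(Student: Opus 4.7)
The plan is to handle the two implications separately, using the Parseval identity for orthonormal bases to translate the $K$-frame inequality into an operator identity, and then exploit the fact that for an isometry $V$ the operator $VV^{*}$ is the orthogonal projection onto $R(V)$.

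For the implication (ii) $\Rightarrow$ (i), I would first verify orthonormality by computing $\langle x_n,x_m\rangle=\langle Lu_n,Lu_m\rangle=\langle L^{*}Lu_n,u_m\rangle=\langle u_n,u_m\rangle=\delta_{nm}$, using $L^{*}L=I$. Then for the Parseval $K$-frame property, I would fix $x\in H$ and write $\sum_{n}|\langle x,x_n\rangle|^{2}=\sum_{n}|\langle L^{*}x,u_n\rangle|^{2}=\|L^{*}x\|^{2}$, appealing to Parseval's identity for the orthonormal basis $\{u_n\}$. The remaining point is to turn $\|L^{*}x\|^{2}$ into $\|K^{*}x\|^{2}$: because $K$ and $L$ are isometries with closed ranges, $KK^{*}$ and $LL^{*}$ are the orthogonal projections onto $R(K)$ and $R(L)$ respectively, and the hypothesis $R(K)=R(L)$ forces $KK^{*}=LL^{*}$, hence $\|L^{*}x\|=\|K^{*}x\|$.

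For (i) $\Rightarrow$ (ii), I would define $L$ on the basis by $Lu_n:=x_n$ and extend linearly by $L\bigl(\sum_n a_nu_n\bigr):=\sum_n a_nx_n$. The orthonormality of $\{x_n\}$ guarantees that $L$ is well-defined, bounded, and in fact an isometry (since $\|\sum a_nx_n\|^{2}=\sum|a_n|^{2}=\|\sum a_nu_n\|^{2}$). For the range identity $R(L)=R(K)$, I would again compute $\sum_{n}|\langle x,x_n\rangle|^{2}=\|L^{*}x\|^{2}$ as above; combining this with the Parseval $K$-frame property yields $\|L^{*}x\|^{2}=\|K^{*}x\|^{2}$ for every $x$, i.e.\ $\langle LL^{*}x,x\rangle=\langle KK^{*}x,x\rangle$. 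Since $LL^{*}$ and $KK^{*}$ are self-adjoint, the polarization identity gives $LL^{*}=KK^{*}$, and because both sides are the orthogonal projections onto $R(L)$ and $R(K)$ respectively, their ranges coincide.

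The main obstacle I anticipate is the passage from the scalar equality of norms $\|L^{*}x\|=\|K^{*}x\|$ to the genuinely set-theoretic statement $R(L)=R(K)$; this is where the isometry hypothesis on $K$ is essential, as it ensures $KK^{*}$ is a projection whose range recovers $R(K)$. Without that, Douglas' theorem would only yield mutual inclusion of ranges up to a constant, not equality of the ranges themselves. Note also that the stated conclusion ``$x_n=Tu_n$'' should be read as $x_n=Lu_n$, which is the natural formulation consistent with the rest of the theorem.
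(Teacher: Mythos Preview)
Your argument is correct in both directions. The crucial point---that for an isometry $V$ the operator $VV^{*}$ is the orthogonal projection onto $R(V)$---is precisely what turns the scalar identity $\|K^{*}x\|=\|L^{*}x\|$ into the range equality $R(K)=R(L)$, and you are right that the isometry hypothesis on $K$ is indispensable for this step. Your reading of ``$x_n=Tu_n$'' as a typographical slip for ``$x_n=Lu_n$'' is also correct.

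There is, however, nothing in the present paper to compare your proof against: this theorem appears in the preliminaries section as a result quoted from reference~\cite{1} (Ahmadi and Rahimi), and the paper supplies no proof of its own. So while your proof stands on its own merits, a comparison with ``the paper's proof'' is not possible here.
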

\begin{proposition}\cite{1} \label{Prop1.18}
Let $K\in B(H)$ and $\{x_n\}_{n\geqslant 1}$ be a $K$-orthonormal basis for $H$. Then $\{x_n\}_{n\geqslant 1}$ has a unique $K$-dual frame which is exactly $\{K^* x_n\}_{n\geqslant1}$.\\
\end{proposition}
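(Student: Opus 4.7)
The plan is to split the proof into two parts: uniqueness of the $K$-dual, and explicit identification that $\{K^*x_n\}$ is that dual. For uniqueness, the key observation is that an orthonormal system is linearly independent in a very strong sense. Specifically, the synthesis operator $T:\ell^2\to H$, $\{a_n\}\mapsto\sum a_n x_n$, is an isometry by orthonormality (the identity $\|Ta\|^2=\sum|a_n|^2$ holds on finitely supported sequences and extends by continuity), and in particular injective. Hence $\{x_n\}_{n\geq 1}$ is a $K$-minimal frame, and by the proposition on uniqueness of $K$-duals for $K$-minimal frames it admits a unique $K$-dual.

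It then remains to verify that $\{K^*x_n\}_{n\geq 1}$ \emph{is} this unique $K$-dual. I would let $V:=\overline{\mathrm{span}}\{x_n\}$ and $P_V$ be the orthogonal projection onto $V$. Since $\{x_n\}$ is an orthonormal basis of $V$, one has $\|P_V x\|^2=\sum|\langle x,x_n\rangle|^2$ for all $x\in H$. Combined with the Parseval $K$-frame identity $\|K^*x\|^2=\sum|\langle x,x_n\rangle|^2$, this gives $\langle KK^*x,x\rangle=\langle P_V x,x\rangle$ for every $x$. Both $KK^*$ and $P_V$ being self-adjoint, polarization forces $KK^*=P_V$, and Douglas' theorem (as stated at the start of the paper) then yields $R(K)\subseteq R(P_V)=V$.

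With $R(K)\subseteq V$ available, the reconstruction identity is immediate: for each $x\in H$, since $Kx\in V$,
\[
\sum_{n=1}^{+\infty}\langle x,K^*x_n\rangle\, x_n=\sum_{n=1}^{+\infty}\langle Kx,x_n\rangle\, x_n=P_V(Kx)=Kx.
\]
A short Bessel check, using the Parseval property twice,
\[
\sum_{n=1}^{+\infty}|\langle y,K^*x_n\rangle|^2=\sum_{n=1}^{+\infty}|\langle Ky,x_n\rangle|^2=\|K^*Ky\|^2\leq\|K\|^4\|y\|^2,
\]
shows $\{K^*x_n\}_{n\geq 1}$ is a Bessel sequence, hence a legitimate $K$-dual, which by uniqueness must be the $K$-dual. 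I do not anticipate a genuine obstacle here; the only conceptually nontrivial move is extracting the operator identity $KK^*=P_V$ from the Parseval equality, after which everything reduces to bookkeeping with adjoints and the fact that $P_V$ acts as the identity on $R(K)$.
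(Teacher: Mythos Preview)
The paper does not actually supply a proof of this proposition: it is quoted from reference~\cite{1} in the preliminaries section with no argument attached, so there is nothing in the paper to compare against line by line.

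Your proof, however, is correct and self-contained. The uniqueness half is exactly the intended mechanism: orthonormality makes the synthesis operator an isometry on $\ell^2$, hence injective, so $\{x_n\}$ is $K$-minimal and the cited equivalence between $K$-minimality and uniqueness of the $K$-dual applies. For the identification of the dual, your route through $KK^*=P_V$ is clean; the use of Douglas' theorem to deduce $R(K)\subset V$ is legitimate (indeed $KK^*=P_V=P_VP_V^*$ gives $R(K)\subset R(P_V)$ immediately), though you could also note directly that $KK^*=P_V$ kills $V^\perp$, so $V^\perp\subset N(K^*)=R(K)^\perp$. The reconstruction identity and the Bessel estimate are then straightforward, and your computation $\sum_n|\langle Ky,x_n\rangle|^2=\|K^*Ky\|^2$ correctly reuses the Parseval $K$-frame identity with $Ky$ in place of $y$. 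No gaps.
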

\begin{proposition}\cite{1}\label{Prop1.19}
Let $K\in B(H)$ and $\{x_n\}_{n\geqslant1}$ be a $K$-orthonormal basis. Then the following statements are equivamlent:
\begin{enumerate}
\item[i.] $\{K^*x_n\}_{n\geqslant1}$ is a $K^*$-orthonormal basis for $H$.
\item[ii.] $K$ is co-isometry.\\
\end{enumerate}
\end{proposition}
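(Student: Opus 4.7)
The plan is to prove the two implications separately. The direction (ii) $\Rightarrow$ (i) is a routine computation using $KK^*=I$, while the converse (i) $\Rightarrow$ (ii) relies on the uniqueness of duals from Proposition~\ref{Prop1.18} to identify $KK^*x_n$ with $x_n$.

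For (ii) $\Rightarrow$ (i), assuming $KK^*=I$, orthonormality of $\{K^*x_n\}$ is immediate from $\langle K^*x_m,K^*x_n\rangle = \langle KK^*x_m,x_n\rangle = \delta_{mn}$; the Parseval $K^*$-frame identity follows by applying the Parseval $K$-frame property of $\{x_n\}$ at $Kx$, which gives $\sum_n|\langle x,K^*x_n\rangle|^2 = \sum_n|\langle Kx,x_n\rangle|^2 = \|K^*Kx\|^2 = \langle KK^*Kx,Kx\rangle = \|Kx\|^2$, where the last step uses $KK^*=I$.

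For (i) $\Rightarrow$ (ii), the key is to first pin down the vectors $KK^*x_n$. Remark~1.11, applied to the $K$-dual pair $(\{x_n\},\{K^*x_n\})$ supplied by Proposition~\ref{Prop1.18}, yields $K^*x = \sum_n\langle x,x_n\rangle K^*x_n$, which is exactly the statement that $\{x_n\}$ is a $K^*$-dual of the Bessel sequence $\{K^*x_n\}$. Under hypothesis (i), Proposition~\ref{Prop1.18} applied with $K^*$ in place of $K$ tells us that $\{K^*x_n\}$ has a unique $K^*$-dual, namely $\{KK^*x_n\}$; comparing with the $K^*$-dual $\{x_n\}$ already identified forces $x_n = KK^*x_n$ for every $n\geq 1$. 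Plugging this equality into the $K$-dual reconstruction $Kx = \sum_n\langle Kx,x_n\rangle x_n$ yields $KK^*(Kx)=Kx$ for all $x\in H$, so $KK^*$ is the identity on $\overline{R(K)}$.

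The remaining task, upgrading $KK^*|_{\overline{R(K)}}=I$ to $KK^*=I$ on all of $H$, is the main obstacle. Via the Parseval $K$-frame identity one has $N(K^*)=\{x_n\}^{\perp}$ and hence $\overline{R(K)}=\overline{\mathrm{span}\{x_n\}}$, so what is needed is completeness of $\{x_n\}$ in $H$. I anticipate that this density must be forced by combining the orthonormality of $\{K^*x_n\}$ (which in particular gives $x_n=KK^*x_n\in\overline{R(K)}$) with the Parseval $K^*$-frame identity for $\{K^*x_n\}$; this is the step I expect to be most delicate, since the weaker pieces of (i) already used above do not visibly force $\overline{\mathrm{span}\{x_n\}}=H$ on their own.
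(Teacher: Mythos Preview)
The paper does not give its own proof of this proposition; it is quoted from \cite{1} without argument, so there is no in-paper proof to compare against.

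Your (ii)$\Rightarrow$(i) direction is correct. For (i)$\Rightarrow$(ii), your reduction is sound up to the point you flag: from the hypotheses you correctly extract $KK^*x_n=x_n$ for every $n$, hence $KK^*=I$ on $\overline{R(K)}=\overline{\mathrm{span}\{x_n\}}$, and what remains is to show that $\overline{\mathrm{span}\{x_n\}}=H$.

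That remaining step, however, cannot be completed as the proposition is written: the implication (i)$\Rightarrow$(ii) fails without further hypotheses. Take $H=\ell^2$ with standard basis $\{e_n\}_{n\geq1}$, let $K$ be the unilateral right shift $Ke_n=e_{n+1}$, and set $x_n:=e_{n+1}$. Then $\{x_n\}$ is an orthonormal system, and since
\[
\|K^*x\|^2=\sum_{m\geq 2}|\langle x,e_m\rangle|^2=\sum_{n\geq 1}|\langle x,x_n\rangle|^2,
\]
it is a Parseval $K$-frame, hence a $K$-orthonormal basis. Now $K^*x_n=K^*e_{n+1}=e_n$, so $\{K^*x_n\}_{n\geq1}=\{e_n\}_{n\geq1}$ is the full orthonormal basis of $H$; because $K$ is an isometry one has $\|Kx\|^2=\|x\|^2=\sum_n|\langle x,e_n\rangle|^2$, so $\{K^*x_n\}$ is a Parseval $K^*$-frame and therefore a $K^*$-orthonormal basis. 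Thus (i) holds, yet $KK^*e_1=0$, so $K$ is not a co-isometry and (ii) fails. The gap you identified is genuine and not merely a matter of finding the right trick: the statement as reproduced here needs an extra assumption (for instance, completeness of $\{x_n\}$ in $H$, equivalently density of $R(K)$) for the direction (i)$\Rightarrow$(ii) to hold.
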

\section{Main results}
\subsection{$K$-Frames, $L$-Frames and $K\oplus L$-frames}
\medskip
\noindent

In this section, we illustrate the relationship between $K$-frames, $L$-frames and $K\oplus L$-frames.\\ \\
 
The following operators show and explain the relationship between a super Hilbert space and the Hilbert spaces that form it.
\begin{proposition}\label{Prop2.1}
The map $$P_1:\begin{array}{rcl}
H_1\oplus H_2 &\longrightarrow& H_1\oplus H_2\\
x\oplus y &\mapsto& x\oplus0
\end{array},$$
and the map $$P_2:\begin{array}{rcl}
H_1\oplus H_2 &\longrightarrow& H_1\oplus H_2\\
x\oplus y&\mapsto& 0\oplus y
\end{array},$$
are two orthogonal projections on $H_1\oplus H_2$ and $R(P_1)=H_1\oplus 0$ and $R(P_2)=0\oplus H_2$.
\end{proposition}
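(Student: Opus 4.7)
The goal is to verify, for each of $P_1$ and $P_2$, the three standard ingredients: the map is a bounded linear operator on $H_1\oplus H_2$, it is idempotent ($P_i^2=P_i$), and it is self-adjoint ($P_i^*=P_i$); then to read off the range from the definition. I would treat $P_1$ in detail and note that $P_2$ is handled by identical reasoning (in fact, $P_2=I-P_1$, which would also give the projection property for free once $P_1$ is settled).

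First I would check linearity of $P_1$ directly from the definition of the direct sum: for scalars $\alpha,\beta$ and vectors $x_1\oplus y_1$, $x_2\oplus y_2$ in $H_1\oplus H_2$, one has $P_1(\alpha(x_1\oplus y_1)+\beta(x_2\oplus y_2))=P_1((\alpha x_1+\beta x_2)\oplus(\alpha y_1+\beta y_2))=(\alpha x_1+\beta x_2)\oplus 0 = \alpha P_1(x_1\oplus y_1)+\beta P_1(x_2\oplus y_2)$. Boundedness (with norm $\leq 1$) follows from $\|P_1(x\oplus y)\|^2=\|x\|^2\leq \|x\|^2+\|y\|^2=\|x\oplus y\|^2$, using the definition of the inner product on the super Hilbert space.

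Next I would verify the two algebraic identities characterizing an orthogonal projection. Idempotency is immediate: $P_1^2(x\oplus y)=P_1(x\oplus 0)=x\oplus 0=P_1(x\oplus y)$. For self-adjointness, I would compute, for arbitrary $x\oplus y,\ a\oplus b\in H_1\oplus H_2$,
\begin{equation*}
\langle P_1(x\oplus y),\,a\oplus b\rangle=\langle x\oplus 0,\,a\oplus b\rangle=\langle x,a\rangle_{H_1}+\langle 0,b\rangle_{H_2}=\langle x,a\rangle_{H_1},
\end{equation*}
and symmetrically $\langle x\oplus y,\,P_1(a\oplus b)\rangle=\langle x,a\rangle_{H_1}$; the two agree, so $P_1^*=P_1$. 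Therefore $P_1$ is an orthogonal projection.

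For the range, the inclusion $R(P_1)\subset H_1\oplus 0$ is clear from the definition, and the reverse inclusion holds because any $x\oplus 0\in H_1\oplus 0$ equals $P_1(x\oplus 0)$. An analogous argument, or the observation $P_2(x\oplus y)=(x\oplus y)-P_1(x\oplus y)$, gives the statement for $P_2$ with $R(P_2)=0\oplus H_2$. There is no real obstacle here; the only point to be careful about is keeping the direct-sum notation straight and using the defining inner product $\langle x\oplus y,a\oplus b\rangle=\langle x,a\rangle_{H_1}+\langle y,b\rangle_{H_2}$ when checking self-adjointness.
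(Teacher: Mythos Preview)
Your proof is correct and follows essentially the same route as the paper: verify idempotency and self-adjointness via the defining inner product, then read off the range. You simply add more detail (explicit linearity and boundedness, both range inclusions, and the pleasant shortcut $P_2=I-P_1$), all of which is fine.
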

\begin{proof} 
We have, clearly, $P_1^2=P_1$ and $P_2^2=P_2$. In the other hand, for $x\oplus y, \\a\oplus b \in H_1\oplus H_2$, we have: $$\begin{array}{rcl}
\langle P_1(x\oplus y), a\oplus b\rangle&=&\langle x\oplus0,a\oplus b\rangle \\
&=&\langle x,a\rangle+\langle 0,b\rangle\\
&=& \langle x,a\rangle+\langle y,0\rangle\\
&=& \langle x\oplus y,a\oplus 0\rangle.
\end{array}$$
Hence for all $a\oplus b\in H_1 \oplus H_2$, $P_1^*(a\oplus b)=a\oplus 0=P(a\oplus b)$. Then $P_1^*=P_1$. \\
Similary, we show that $P_2^*=P_2.$ Then $P_1$ and $P_2$ are orthogonal projections.\\  \\
It is clear that $R(P_1)=\{x\oplus 0,\, x\in H_1\}:=H_1\oplus 0$ and $R(P_2)=\{0\oplus y,\, y\in H_2\}:=0\oplus H_2$.\\
\end{proof}
We know that a K-frame is, in particular, a Bessel sequence, which is why it would be useful to start with the following proposition.
\begin{proposition}
Let  $\{x_n\}_{n\geqslant 1}$ and $\{y_n\}_{n\geqslant 1}$ be two sequences in $H_1$ and $H_2$ respectively. The following statements are equivalent:
\begin{enumerate}
\item[i.] $\{x_n\oplus y_n\}_{n\geqslant 1}$ is a Bessel sequence in $H_1 \oplus H_2$.
\item[ii.] $\{x_n\}_{n\geqslant 1}$ and $\{y_n\}_{n\geqslant 1}$ are two Bessel sequences in $H_1$ and $H_2$ respectively.\\
\end{enumerate}
\end{proposition}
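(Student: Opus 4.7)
The plan is to prove (ii) $\Rightarrow$ (i) by a direct Bessel-bound computation that uses the definition of the inner product on $H_1\oplus H_2$ together with the elementary inequality $|a+b|^2\le 2|a|^2+2|b|^2$, and to prove (i) $\Rightarrow$ (ii) by restricting to the natural embeddings $x\mapsto x\oplus 0$ and $y\mapsto 0\oplus y$ identified in Proposition~\ref{Prop2.1}.

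For the direction (ii) $\Rightarrow$ (i), let $B_1$ and $B_2$ be Bessel bounds for $\{x_n\}$ and $\{y_n\}$ respectively. For arbitrary $x\oplus y\in H_1\oplus H_2$, I would expand
\[
\langle x\oplus y,\,x_n\oplus y_n\rangle=\langle x,x_n\rangle+\langle y,y_n\rangle
\]
and then sum the moduli squared, applying $|a+b|^2\le 2|a|^2+2|b|^2$ termwise. Splitting the resulting sum into the two separate Bessel sums and using $\|x\oplus y\|^2=\|x\|^2+\|y\|^2$ yields a Bessel bound of $2\max(B_1,B_2)$ (or even the slightly sharper bound obtained by weighting), which establishes (i).

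For the converse (i) $\Rightarrow$ (ii), let $B$ be a Bessel bound for $\{x_n\oplus y_n\}$. Given $x\in H_1$, I would apply the Bessel inequality to the vector $x\oplus 0$; since $\langle x\oplus 0,x_n\oplus y_n\rangle=\langle x,x_n\rangle$ and $\|x\oplus 0\|=\|x\|$, this immediately gives
\[
\sum_{n=1}^{+\infty}|\langle x,x_n\rangle|^2\le B\|x\|^2,
\]
so $\{x_n\}$ is Bessel in $H_1$ with bound $B$. The argument for $\{y_n\}$ is symmetric, using the test vector $0\oplus y$.

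There is no real obstacle here; the only point requiring a tiny bit of care is that the constant $2$ is genuinely needed in the forward direction because $|\langle x,x_n\rangle+\langle y,y_n\rangle|^2$ is not controlled by $|\langle x,x_n\rangle|^2+|\langle y,y_n\rangle|^2$ alone, whereas the reverse direction loses nothing in the constants. I would simply record the explicit bound $B=2\max(B_1,B_2)$ (or $B_1+B_2$ via the Cauchy–Schwarz form) at the end of the proof for the reader's convenience.
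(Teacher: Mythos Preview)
Your proposal is correct and follows essentially the same approach as the paper: the direction (ii) $\Rightarrow$ (i) is proved identically via $|a+b|^2\le 2(|a|^2+|b|^2)$ to obtain the bound $2\max(B_1,B_2)$, and for (i) $\Rightarrow$ (ii) the paper likewise exploits Proposition~\ref{Prop2.1}, phrasing it as applying the projections $P_1,P_2$ to the sequence rather than plugging in the test vectors $x\oplus 0$ and $0\oplus y$, which amounts to the same computation.
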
 
\begin{proof} \hspace{0.2cm}Assume that $\{x_n\oplus y_n\}_{n\geqslant 1}$ is a Bessel sequence in $H_1 \oplus H_2$ with Bessel bound $B$. 
Then $\{P_1(x_n\oplus y_n)\}_{n\geqslant 1}=\{x_n\oplus 0\}_{n\geqslant1}$ is a Bessel sequence for $H_1\oplus 0$ and $B$ is a Bessel bound. That means that for all $x\oplus 0\in H_1\oplus 0$, $\displaystyle{\sum_{n=1}^{+\infty}\vert \langle x\oplus 0, x_n\oplus0\rangle \vert^2}\leq B\|x\oplus0\|^2.$ 
Hence for all $x\in H_1$, $\displaystyle{\sum_{n=1}^{+\infty}\vert\langle x,x_n \rangle \vert^2\leq B\|x\|^2}.$ Then $\{x_n\}_{n\geqslant1}$ is a Bessel sequence in $H_1$.  
By taking $P_2$ instead of $P_1$ we prove, similary, that $\{y_n\}_{n\geqslant1}$ is a Bessel sequence for $H_2$.\\
Conversely, assume that $\{x_n\}_{n\geqslant 1}$ and $\{y_n\}_{n\geqslant 1}$ are two Bessel sequences in $H_1$ and $H_2$ respectively with Bessel bounds $B_1$ and $B_2$ respectively. Note $B=\max\{B_1,B_2\}$. Then for all $x\oplus y\in H_1\oplus H_2$, we have: $$\begin{array}{rcl}
\displaystyle{\sum_{n=1}^{+\infty}\vert \langle x\oplus y,x_n\oplus y_n\rangle\vert^2}&=& \displaystyle{\sum_{n=1}^{+\infty}\vert \langle x,x_n\rangle+\langle y,y_n\rangle \vert^2}\\
&\leq& \displaystyle{\sum_{n=1}^{+\infty}2(\vert \langle x,x_n\rangle \vert^2+\vert\langle y,y_n\rangle \vert^2)}\\
&\leq& 2B_1\|x\|^2+2B_2\|y\|^2\\
&\leq& 2B(\|x\|^2+\|y\|^2)=2B\|x\oplus y\|^2.
\end{array}$$
Hence $\{x_n\oplus y_n\}_{n\geqslant1}$ is a Bessel sequence in $H_1\oplus H_2$.\\ 
\end{proof}
The following result shows the relationship between the  operators associated with $\{x_n\oplus y_n\}_{n\geqslant 1}$ and those associated with $\{x_n\}_{n\geqslant 1}$ and $\{y_n\}_{n\geqslant 1}$ whenever $\{x_n\oplus y_n\}_{n\geqslant 1}$ is a Bessel sequence.
\begin{proposition}\label{Prop2.3}
Let $\{x_n\}_{n\geqslant 1}$ and $\{y_n\}_{n\geqslant 1}$  be two Bessel sequences in $H_1$ and $H_2$ respectively. let $T_1, T_2$ and $T$ be the synthesis operators of $\{x_n\}_{n\geqslant 1}$, $\{y_n\}_{n\geqslant 1}$ and $\{x_n\oplus y_n\}_{n\geqslant1}$ respectively. let $\theta_1, \theta_2$ and $\theta$ be the frame transforms of $\{x_n\}_{n\geqslant 1}$, $\{y_n\}_{n\geqslant 1}$ and $\{x_n\oplus y_n\}_{n\geqslant1}$ respectively. let $S_1, S_2$ and $S$ be the frame operators of $\{x_n\}_{n\geqslant 1}$, $\{y_n\}_{n\geqslant 1}$ and $\{x_n\oplus y_n\}_{n\geqslant1}$ respectively.
 Then: 
\begin{enumerate}
\item[i. ] For all $a\in \ell^2$, $T(a)=T_1(a)\oplus T_2(a)$.
\item[ii. ] For all $x\oplus y\in H_1\oplus H_2$, $\theta(x\oplus y)=\theta_1(x)+\theta_2(y)$.
\item[iii. ] For all $x\oplus y\in H_1\oplus H_2$, $S(x\oplus y)=S_1(x)+T_1\theta_2(y)\oplus S_2(y)+T_2\theta_1(x).$\\
\end{enumerate}
\end{proposition}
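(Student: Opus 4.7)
The strategy is straightforward: each of the three identities is an unpacking of the relevant operator's definition, and (iii) follows from (i) and (ii) via the factorization $S = T\theta$. I would treat the three parts in order.

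\medskip

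\noindent\textbf{Part (i).} I would compute $T(a)$ directly from the definition of the synthesis operator. For $a = \{a_n\}_{n\geq 1} \in \ell^2$,
$$T(a) \;=\; \sum_{n=1}^{+\infty} a_n(x_n \oplus y_n) \;=\; \Bigl(\sum_{n=1}^{+\infty} a_n x_n\Bigr) \oplus \Bigl(\sum_{n=1}^{+\infty} a_n y_n\Bigr) \;=\; T_1(a) \oplus T_2(a),$$
where the middle equality uses the fact that addition in $H_1 \oplus H_2$ is componentwise and that, since $\{x_n\}$ and $\{y_n\}$ are Bessel, both partial sums converge in their respective spaces; the convergence of the full sum in $H_1 \oplus H_2$ then implies convergence of each component by applying the bounded projections $P_1$ and $P_2$ from Proposition \ref{Prop2.1}.

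\medskip

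\noindent\textbf{Part (ii).} I would simply evaluate $\theta$ on an element $x \oplus y$ using the definition of the inner product on $H_1 \oplus H_2$:
$$\theta(x \oplus y) \;=\; \bigl\{\langle x \oplus y, x_n \oplus y_n\rangle\bigr\}_{n\geq 1} \;=\; \bigl\{\langle x, x_n\rangle + \langle y, y_n\rangle\bigr\}_{n\geq 1} \;=\; \theta_1(x) + \theta_2(y).$$
Alternatively, one could derive (ii) as a consequence of (i) via the adjoint relation $\theta = T^*$, which would serve as a small sanity check.

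\medskip

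\noindent\textbf{Part (iii).} Using $S = T\theta$ together with (i) and (ii), I would write, for $x \oplus y \in H_1 \oplus H_2$,
$$S(x\oplus y) \;=\; T\bigl(\theta_1(x) + \theta_2(y)\bigr) \;=\; T_1\bigl(\theta_1(x)+\theta_2(y)\bigr) \oplus T_2\bigl(\theta_1(x)+\theta_2(y)\bigr),$$
and then distribute $T_1, T_2$ over the sum and recognize $T_1\theta_1 = S_1$ and $T_2\theta_2 = S_2$ to obtain the stated formula $S(x\oplus y) = (S_1(x) + T_1\theta_2(y)) \oplus (S_2(y) + T_2\theta_1(x))$.

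\medskip

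\noindent There is essentially no hard step here; the only thing requiring a moment of care is the manipulation of infinite sums in Part (i), which is justified by the Bessel property and the continuity of $P_1, P_2$. Parts (ii) and (iii) are then pure bookkeeping once (i) is in hand.
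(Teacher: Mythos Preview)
Your proposal is correct and follows essentially the same route as the paper's own proof: direct computation from the definitions for (i) and (ii), and then $S=T\theta$ combined with (i) and (ii) for (iii). Your added remarks on convergence in (i) and the adjoint sanity check in (ii) are slight elaborations beyond what the paper records, but the argument is otherwise identical.
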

\begin{proof}
\begin{enumerate}
\item[i. ] Let $a=\{a_n\}_{n\geqslant1}\in \ell^2$, we have: $$\begin{array}{rcl}
T(a)&=&\displaystyle{\sum_{n=1}^{+\infty}a_n \, x_n\oplus y_n}\\
&=& \displaystyle{\sum_{n=1}^{+\infty}a_n \, x_n} \oplus  \displaystyle{\sum_{n=1}^{+\infty}a_n \, y_n}\\
&=& T_1(a)\oplus T_2(a).
\end{array}$$
\item[ii.] Let $x\oplus y\in H_1\oplus H_2$, we have: $$\begin{array}{rcl}
\theta(x\oplus y)&=&\{\langle x\oplus y,x_n\oplus y_n\rangle \}_{n\geqslant1}\\
&=& \{\langle x,x_n\rangle+\langle y, y_n\rangle \}_{n\geqslant1}\\
&=& \{\langle x,x_n\rangle \}_{n\geqslant1}+\{\langle y,y_n\rangle \}_{n\geqslant 1}\\
&=& \theta_1(x)+\theta_2(y).
\end{array}$$
\item[iii. ] Let $x\oplus y\in H_1\oplus H_2$, we have: $$\begin{array}{rcl}
S(x\oplus y)&=& T\theta(x\oplus y)\\
&=& T(\theta_1(x)+\theta_2(y))\\
&=& T(\theta_1(x))+T(\theta_2(y))\\
&=& T_1(\theta_1(x))\oplus T_2(\theta_1(x))+ T_1(\theta_2(y))\oplus T_2(\theta_2(y))\\
&=& S_1(x)\oplus T_2\theta_1(x)+ T_1\theta_2(y)\oplus S_2(y)\\
&=& S_1(x)+T_1\theta_2 y\oplus S_2(y)+T_2\theta_1(x).
\end{array}$$
\end{enumerate}
\end{proof}
The following result is a necessary condition for a sequence to be M-frame in $H_1\oplus H_2$.
\begin{proposition}\label{Prop2.4}
Let $M\in B(H_1\oplus H_2)$ and $\{x_n\oplus y_n\}_{n\geqslant 1}$ be a sequence in $H_1\oplus H_2$. If $\{x_n\oplus y_n\}_{n\geqslant 1}$ is a $M$-frame for $H_1\oplus H_2$ with $M$-frame bounds $A$ and $B$, then: 
\begin{enumerate}
\item[i. ] For all $x\in H_1$,  $A\| M_1^*(x)\|^2\leq \displaystyle{\sum_{n=1}^{+\infty}\vert\langle x,x_n\rangle\vert^2}\leq B\| x\|^2$.
\item[ii. ]  For all $y\in H_2$,  $A\| M_2^*(y)\|^2\leq \displaystyle{\sum_{n=1}^{+\infty}\vert\langle y,y_n\rangle \vert^2}\leq B\| y\|^2$.
\end{enumerate}
Where $M_1: H_1\oplus H_2\longrightarrow H_1 $ and $M_2:H_1\oplus H_2\longrightarrow H_2$ are the bounded linear operators such that for all $x\oplus y\in H_1\oplus H_2$, $M(x\oplus y)=M_1(x\oplus y)\oplus M_2(x\oplus y).$\\
\end{proposition}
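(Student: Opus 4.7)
The plan is to reduce the proposition to a direct substitution in the defining $M$-frame inequality, once the adjoint $M^*$ is identified in terms of $M_1^*$ and $M_2^*$.

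\textbf{First step.} I would compute $M^*$. Writing $M(u \oplus v) = M_1(u \oplus v) \oplus M_2(u \oplus v)$, a straightforward inner-product calculation gives, for any $a \oplus b \in H_1 \oplus H_2$,
$$\langle M(u \oplus v), a \oplus b\rangle = \langle M_1(u \oplus v), a\rangle + \langle M_2(u \oplus v), b\rangle = \langle u \oplus v,\, M_1^*(a) + M_2^*(b)\rangle,$$
so that $M^*(a \oplus b) = M_1^*(a) + M_2^*(b)$. In particular $M^*(x \oplus 0) = M_1^*(x)$ and $M^*(0 \oplus y) = M_2^*(y)$, both viewed as elements of $H_1 \oplus H_2$.

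\textbf{Second step.} I would apply the $M$-frame inequality to the vector $x \oplus 0$:
$$A \|M^*(x \oplus 0)\|^2 \leq \sum_{n=1}^{+\infty} |\langle x \oplus 0, x_n \oplus y_n\rangle|^2 \leq B \|x \oplus 0\|^2.$$
Using $\langle x \oplus 0, x_n \oplus y_n\rangle = \langle x, x_n\rangle$, the norm identity $\|x \oplus 0\| = \|x\|$, and the adjoint formula from the first step, this collapses directly to statement (i). Statement (ii) is obtained symmetrically by substituting $0 \oplus y$ and invoking $M^*(0 \oplus y) = M_2^*(y)$.

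\textbf{Expected difficulty.} I do not anticipate a real obstacle here; the content of the proposition is essentially that the canonical embeddings $x \mapsto x \oplus 0$ and $y \mapsto 0 \oplus y$ isolate the coordinate behaviour of the sequence. The one point requiring care is that, with the convention stated in the proposition, $M_1^*$ maps $H_1$ into $H_1 \oplus H_2$ (and similarly for $M_2^*$), so the norms on the left-hand sides are the super Hilbert norms; once this is kept straight in the first step, the rest is a one-line substitution.
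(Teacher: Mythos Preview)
Your proposal is correct and mirrors the paper's proof almost exactly: both arguments substitute $x\oplus 0$ (respectively $0\oplus y$) into the $M$-frame inequality and identify $M^*(x\oplus 0)=M_1^*(x)$ (respectively $M^*(0\oplus y)=M_2^*(y)$) via the same inner-product computation. The only cosmetic difference is that you compute the full formula $M^*(a\oplus b)=M_1^*(a)+M_2^*(b)$ up front, whereas the paper derives only the two special cases it needs.
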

\begin{proof}
We have for all $x\oplus y\in H_1\oplus H_2$, $$A\|M^*(x\oplus y)\|^2\leq \displaystyle{\sum_{n=1}^{+\infty}\vert\langle x\oplus y,x_n\oplus y_n\rangle \vert^2}\leq B\|x\oplus y\|^2.$$
\begin{enumerate}
\item[i. ] In particular for $y=0$, we have for all $x\in H_1$, $$A\|M^*(x\oplus 0)\|^2\leq \displaystyle{\sum_{n=1}^{+\infty}\vert\langle x,x_n\rangle \vert^2}\leq B\|x\|^2.$$
Let $a\oplus b\in H_1\oplus H_2$ and let $x\in H_1$, we have: $$\begin{array}{rcl}
\langle M(a\oplus b),x\oplus 0\rangle &=&\langle M_1(a\oplus b)\oplus M_2(a\oplus b), x\oplus 0\rangle\\
&=& \langle M_1(a \oplus b), x\rangle \\
&=& \langle a\oplus b,M_1^*(x)\rangle.
\end{array}$$
Hence $M^*(x\oplus 0)=M_1^*(x)$. Then  For all $x\in H_1$,  $$A\| M_1^*(x)\|^2\leq \displaystyle{\sum_{n=1}^{+\infty}\vert\langle x,x_n\rangle \vert^2}\leq B\| x\|^2.$$
\item[ii. ] In particular for $x=0$, we have for all $y\in H_2$, $$A\|M^*(0\oplus y)\|^2\leq \displaystyle{\sum_{n=1}^{+\infty}\vert\langle y,y_n\rangle \vert^2}\leq B\|y\|^2.$$
Let $a\oplus b\in H_1\oplus H_2$ and let $y\in H_2$, we have: $$\begin{array}{rcl}
\langle M(a\oplus b),0\oplus y\rangle&=&\langle M_1(a\oplus b)\oplus M_2(a\oplus b), 0\oplus y\rangle\\
&=& \langle M_2(a \oplus b), y\rangle\\
&=& \langle a\oplus b,M_2^*(y)\rangle.
\end{array}$$
Hence $M^*(0\oplus y)=M_2^*(y)$. Then  For all $x\in H_1$,  $$A\| M_2^*(y)\|^2\leq \displaystyle{\sum_{n=1}^{+\infty}\vert\langle y,y_n\rangle \vert^2}\leq B\| y\|^2.$$
\end{enumerate}
\end{proof}
The following corollary justifies why we will always assume  $\{x_n\}_{n\geqslant1}$ and $\{y_n\}_{n\geqslant 1}$ to be $K$-frame and $L$-frame respectively.
\begin{corollary}\label{Cor2.5}
Let $K\in B(H_1)$ and $L\in B(H_2)$. If a sequence $\{x_n\oplus y_n\}_{n\geqslant 1}$ in $H_1\oplus H_2$ is a $K\oplus L$-frame for $H_1\oplus H_2$, then $\{x_n\}_{n\geqslant 1}$ is a $K$-frame for $H_1$ and $\{y_n\}_{n\geqslant 1}$ is a $L$-frame for $H_2$.\\
\end{corollary}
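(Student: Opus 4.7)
The plan is to deduce this directly from Proposition \ref{Prop2.4} by specializing $M = K\oplus L$. The bulk of the work consists in identifying the operators $M_1, M_2$ attached to $M$ in the statement of Proposition \ref{Prop2.4} and then computing their adjoints explicitly, which in this case reduces to a straightforward pairing calculation.

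First I would observe that, by definition of $K\oplus L$, for every $x\oplus y\in H_1\oplus H_2$ we have $(K\oplus L)(x\oplus y)=K(x)\oplus L(y)$, so the decomposition $M(x\oplus y)=M_1(x\oplus y)\oplus M_2(x\oplus y)$ given in Proposition \ref{Prop2.4} forces $M_1(x\oplus y)=K(x)$ and $M_2(x\oplus y)=L(y)$. In particular, $M_1$ depends only on the first coordinate and $M_2$ only on the second.

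Next I would compute $M_1^*$ and $M_2^*$. For $a\oplus b\in H_1\oplus H_2$ and $x\in H_1$, a direct calculation gives
$$\langle M_1(a\oplus b),x\rangle=\langle K(a),x\rangle=\langle a,K^*(x)\rangle=\langle a\oplus b,K^*(x)\oplus 0\rangle,$$
so $M_1^*(x)=K^*(x)\oplus 0$, and hence $\|M_1^*(x)\|=\|K^*(x)\|$. The symmetric computation yields $M_2^*(y)=0\oplus L^*(y)$ and $\|M_2^*(y)\|=\|L^*(y)\|$.

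Plugging these two identities into the two inequalities supplied by Proposition \ref{Prop2.4} immediately gives the $K$-frame inequalities for $\{x_n\}_{n\geqslant 1}$ and the $L$-frame inequalities for $\{y_n\}_{n\geqslant 1}$ with the same bounds $A,B$, which finishes the proof. There is no real obstacle here; the only place where one must be a little careful is making sure that the ambient space of $M_i^*$ is $H_1\oplus H_2$ (not $H_i$), so that $\|M_1^*(x)\|$ really coincides with $\|K^*(x)\|$ via the $\oplus 0$ embedding, and similarly for $M_2^*$.
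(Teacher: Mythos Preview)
Your proof is correct and follows essentially the same approach as the paper's own proof: both specialize $M=K\oplus L$ in Proposition \ref{Prop2.4}, identify $M_1(x\oplus y)=K(x)$ and $M_2(x\oplus y)=L(y)$, compute $M_1^*(x)=K^*(x)\oplus 0$ and $M_2^*(y)=0\oplus L^*(y)$ via the same pairing calculation, and conclude from $\|M_1^*(x)\|=\|K^*(x)\|$, $\|M_2^*(y)\|=\|L^*(y)\|$.
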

\begin{proof}
We have for all $x\oplus y\in H_1\oplus H_2$, $K\oplus L(x\oplus y):=K(x)\oplus L(y)=M_1(x\oplus y)\oplus M_2(x\oplus y).$ Let $a\oplus b\in H_1\oplus H_2$ and $x\in H_1$, we have: $$\begin{array}{rcl}
\langle M_1(a\oplus b),x\rangle &=& \langle K(a),x\rangle \\
&=& \langle a,K^*(x)\rangle\\
&=& \langle a\oplus b,K^*(x)\oplus 0\rangle.
\end{array}$$ Hence $M_1^*(x)=K^*(x)\oplus 0$, then $\| M_1^*(x)\|=\|K^*(x)\|$.\\
Let $a\oplus b\in H_1\oplus H_2$ and $y\in H_2$, we have: $$\begin{array}{rcl}
\langle M_2(a\oplus b),y\rangle &=& \langle L(b),y\rangle \\
&=& \langle b,L^*(y)\rangle \\
&=& \langle a\oplus b,0\oplus L^*(y)\rangle.
\end{array}$$ Hence $M_2^*(y)=0\oplus L^*(y)$, then $\| M_2^*(y)\|=\|L^*(y)\|$.\\
Proposition \ref{Prop2.4} completes the proof.
\end{proof}
\begin{corollary}
Let $K\in B(H_1)$ and $L\in B(H_2)$. Then there exist a $K$-frame $\{x_n\}_{n\geqslant 1}$ for $H_1$ and a $L$-frame $\{y_n\}_{n\geqslant 1}$ for $H_2$ such that $\{x_n\oplus y_n\}_{n\geqslant 1}$ is a $K\oplus L$-frame for $H_1\oplus H_2$.\\
\end{corollary}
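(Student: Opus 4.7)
The strategy is to build the required $K$-frame, $L$-frame, and $K\oplus L$-frame simultaneously from a carefully chosen orthonormal basis of the super Hilbert space, and then invoke Proposition \ref{Prop1.12} which turns any frame into a $K$-frame by applying $K$.

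First I would fix an orthonormal basis $\{e_n\}_{n\geqslant 1}$ of $H_1$ and an orthonormal basis $\{f_n\}_{n\geqslant 1}$ of $H_2$, and interleave them: define $z_{2n-1}=e_n\oplus 0$ and $z_{2n}=0\oplus f_n$. The family $\{z_n\}_{n\geqslant 1}$ is an orthonormal basis of $H_1\oplus H_2$ (since $\{e_n\oplus 0\}\cup\{0\oplus f_n\}$ clearly is), so in particular it is a frame for $H_1\oplus H_2$.

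Next I would apply Proposition \ref{Prop1.12} with the operator $K\oplus L\in B(H_1\oplus H_2)$ to conclude that $\{(K\oplus L)z_n\}_{n\geqslant 1}$ is a $K\oplus L$-frame for $H_1\oplus H_2$. By the definition of $K\oplus L$, this sequence is
\[
(K\oplus L)z_{2n-1}=Ke_n\oplus 0,\qquad (K\oplus L)z_{2n}=0\oplus Lf_n,
\]
which is already written in the form $x_n\oplus y_n$ with $x_{2n-1}=Ke_n$, $x_{2n}=0$, $y_{2n-1}=0$, $y_{2n}=Lf_n$. It then remains to show that $\{x_n\}_{n\geqslant 1}$ is a $K$-frame for $H_1$ and $\{y_n\}_{n\geqslant 1}$ is an $L$-frame for $H_2$, which is exactly the content of Corollary \ref{Cor2.5} applied to the $K\oplus L$-frame just constructed.

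There is no serious obstacle here: the only mildly delicate point is to choose an orthonormal basis of $H_1\oplus H_2$ that splits as a sequence of simple tensors $x_n\oplus y_n$ (not all bases do, but the interleaving above does), so that Corollary \ref{Cor2.5} can be invoked after Proposition \ref{Prop1.12}. Inserting the zero vectors in the component sequences is harmless, since zero terms contribute nothing to the frame sums and thus preserve the $K$-frame and $L$-frame inequalities; in fact the construction yields Parseval frames throughout, giving a slightly stronger statement for free.
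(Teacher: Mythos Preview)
Your proof is correct and follows essentially the same route as the paper: apply Proposition~\ref{Prop1.12} to a frame of $H_1\oplus H_2$ to get a $K\oplus L$-frame, then invoke Corollary~\ref{Cor2.5} to extract the component $K$-frame and $L$-frame. The only difference is that the paper starts from ``any frame $\{x_n\oplus y_n\}$'' rather than your specific interleaved orthonormal basis; your concern that one must choose a basis that ``splits as simple tensors'' is misplaced, since $H_1\oplus H_2$ is a direct sum (not a tensor product) and \emph{every} element is already of the form $x\oplus y$, so any frame whatsoever can be written as $\{x_n\oplus y_n\}$.
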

\begin{proof}
Take any frame $\{x_n\oplus y_n\}_{n\geqslant 1}$ for $H_1\oplus H_2$. By proposition \ref{Prop1.12}, $\{K\oplus L(x_n\oplus y_n)\}_{n\geqslant 1}$ is a $K\oplus L$-frame for $H_1\oplus H_2$. That means that $\{K(x_n)\oplus L(y_n)\}_{n\geqslant 1}$ is a $K\oplus L$-frame for $H_1\oplus H_2$. And by  corollary \ref{Cor2.5}, $\{K(x_n)\}_{n\geqslant 1}$ is a $K$-frame for $H_1$ and $\{L(y_n)\}_{n\geqslant 1}$ is a $L$-frame for $H_2$.\\
\end{proof}
This lemma is very useful for the rest.
\begin{lemma}
Let $K\in B(H_1)$ and $L\in B(H_2)$. Then $(K\oplus L)^*=K^*\oplus L^*$.\\
\end{lemma}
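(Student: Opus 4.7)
The plan is a direct unfolding of the adjoint definition on the super Hilbert space, with no obstacles to speak of — the main point is just to verify that the adjoint of the operator $K\oplus L$ acts componentwise in the expected way.

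First I would fix arbitrary elements $x\oplus y$ and $a\oplus b$ in $H_1\oplus H_2$ and compute the inner product $\langle (K\oplus L)(x\oplus y), a\oplus b\rangle$ by expanding the left slot using the definition of $K\oplus L$: it equals $\langle K(x)\oplus L(y), a\oplus b\rangle$, which by the definition of the inner product on the super Hilbert space equals $\langle K(x), a\rangle_{H_1} + \langle L(y), b\rangle_{H_2}$.

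Next I would move each operator to the other side using the adjoints already available on $H_1$ and $H_2$ separately, obtaining $\langle x, K^*(a)\rangle_{H_1} + \langle y, L^*(b)\rangle_{H_2}$, which reassembles to $\langle x\oplus y,\; K^*(a)\oplus L^*(b)\rangle$. By the very definition of the direct-sum operator given in the preliminaries, $K^*(a)\oplus L^*(b) = (K^*\oplus L^*)(a\oplus b)$, so the computation yields
\[
\langle (K\oplus L)(x\oplus y), a\oplus b\rangle = \langle x\oplus y, (K^*\oplus L^*)(a\oplus b)\rangle
\]
for every $x\oplus y, a\oplus b \in H_1\oplus H_2$. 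Since $K^*\oplus L^*$ is bounded (being the direct sum of two bounded operators), uniqueness of the adjoint forces $(K\oplus L)^* = K^*\oplus L^*$, which is the claim. The only thing to be slightly careful about is keeping the two component inner products straight when recombining, but otherwise this is pure bookkeeping.
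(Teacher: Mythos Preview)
Your proof is correct and follows essentially the same approach as the paper: both compute $\langle (K\oplus L)(x\oplus y), a\oplus b\rangle$ by expanding via the direct-sum inner product, applying the adjoints $K^*$ and $L^*$ componentwise, and reassembling to identify the adjoint as $K^*\oplus L^*$. Your version is slightly more explicit in invoking boundedness of $K^*\oplus L^*$ and uniqueness of the adjoint, but the argument is the same.
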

\begin{proof}
Let $x\oplus y, a\oplus b \in H_1\oplus H_2$, we have: $$\begin{array}{rcl}
\langle K\oplus L(x\oplus y), a\oplus b\rangle &=& \langle K(x)\oplus L(y),a\oplus b\rangle\\
&=& \langle K(x),a\rangle +\langle L(y),b\rangle\\
&=& \langle x,K^*(a)\rangle +\langle y,L^*(b)\rangle \\
&=& \langle x\oplus y, K^*(a)\oplus L^*(b)\rangle\\
&=& \langle x\oplus y,(K^*\oplus L^*)(a \oplus b)\rangle .
\end{array}$$\\
\end{proof}
The following proposition shows that there is no $K\oplus L$-frame for $H\oplus H$ in the form $\{x_n\oplus x_n\}_{n\geqslant 1}$ whenever $K,L\neq 0$.
\begin{proposition}\label{Prop2.8}
Let $K, L\in B(H)$ and $\{x_n\oplus y_n\}_{n\geqslant1}$ be a Bessel sequence for $H\oplus H$. Then: 
$$\{x_n\oplus x_n\}_{n\geqslant 1} \text{ is a $K\oplus L$-frame for } H\oplus H \Longleftrightarrow K=L=0.$$\\ \end{proposition}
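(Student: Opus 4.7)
The plan is to exploit the special structure of the diagonal sequence $\{x_n\oplus x_n\}$, which makes the inner products collapse to a single-space quantity, and then to choose a clever vector that kills the right-hand side of the frame inequality and forces the lower bound to vanish.

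First I would compute, for arbitrary $x\oplus y\in H\oplus H$, the ``middle'' quantity
$$\sum_{n=1}^{+\infty}|\langle x\oplus y, x_n\oplus x_n\rangle|^2 = \sum_{n=1}^{+\infty}|\langle x+y, x_n\rangle|^2,$$
using the definition of the inner product on the super Hilbert space. At the same time, using the lemma $(K\oplus L)^*=K^*\oplus L^*$ proved just above, I would rewrite the left-hand side of the $K\oplus L$-frame inequality as $A(\|K^*x\|^2+\|L^*y\|^2)$. So the $K\oplus L$-frame inequality becomes
$$A\bigl(\|K^*x\|^2+\|L^*y\|^2\bigr)\le \sum_{n=1}^{+\infty}|\langle x+y,x_n\rangle|^2\le B\|x\oplus y\|^2\qquad(\forall x,y\in H).$$

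For the forward implication, the decisive step is to plug in $y=-x$. The middle sum then becomes $\sum_n|\langle 0, x_n\rangle|^2=0$, while the left-hand side becomes $A(\|K^*x\|^2+\|L^*x\|^2)$. Since $A>0$, this forces $K^*x=L^*x=0$ for every $x\in H$, hence $K^*=L^*=0$ and therefore $K=L=0$.

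For the converse, if $K=L=0$ then $K\oplus L=0$, so the lower frame bound reduces to $0\le\sum|\langle x\oplus y,x_n\oplus x_n\rangle|^2$, which is automatic. For the upper bound I would use that $\{x_n\oplus y_n\}$ is Bessel (so in particular $\{x_n\}$ is Bessel in $H$, via the projection argument of Proposition 2.1 and the preceding proposition), and then estimate $\sum_n|\langle x+y,x_n\rangle|^2\le B\|x+y\|^2\le 2B(\|x\|^2+\|y\|^2)=2B\|x\oplus y\|^2$, giving a Bessel bound and hence a (trivial) lower frame bound with $A$ arbitrary. No real obstacle is expected; the only subtlety is recognizing that the substitution $y=-x$ is exactly what exposes the incompatibility between a nonzero $K\oplus L$ and the diagonal form of the sequence.
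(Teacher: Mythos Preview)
Your proposal is correct and follows essentially the same route as the paper: the decisive substitution $y=-x$ (the paper writes it as considering $x\oplus(-x)$) kills the middle sum and forces $K^*=L^*=0$, and the converse is the trivial observation that any Bessel sequence is a $0$-frame. The only cosmetic difference is that the paper phrases the forward direction as a contrapositive, while you argue it directly.
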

\begin{proof}\hspace{0.2cm}
\begin{enumerate}
\item[i.] Assume that $K\neq 0$ or $L\neq 0$. We can suppose the case of $K\neq 0$ and the other case will be obtained similary.\\
For $x\in H$ such that $K^*(x)\neq 0$, we have $\;\|K^*\oplus L^*(x\oplus (-x)\,)\|^2\\=\| K^*(x)\|^2+\|L^*(x)\|^2\geqslant \|K^*(x)\|^2\neq 0$ and $\displaystyle{\sum_{n=1}^{+\infty}\vert \langle x\oplus (-x),x_n\oplus x_n\rangle\vert^2=0}$.\\ Hence $\{x_n\oplus x_n\}_{n\geqslant 1}$ is not a $K\oplus L$-frame for $H_1\oplus H_2$.\\
\item[ii. ] Assume that $K=L=0$. the result follows immediately from the fact that $\{x_n\oplus y_n\}_{n\geqslant 1}$ is a Bessel sequence.\\
\end{enumerate}
\end{proof}
Two non-disjoints frames (two frames whose the direct sum is not a frame for the super Hilbert space of the underlying Hilbert spaces) shows that if $\{x_n\}_{n\geqslant 1}$ is a $K$-frame and $\{y_n\}_{n\geqslant 1}$ is a $L$-frame, $\{x_n\oplus y_n\}_{n\geqslant 1}$ is not necessarly a $K\oplus L$-frame.\\ \\ Another simple example to see that is the following example which is an immediate result of proposition \ref{Prop2.8}. 
\begin{example}
Let $K\in B(H)$ such that $K\neq 0$. Take any $K$-frame $\{x_n\}_{n\geqslant 1}$ for $H$,  $\{x_n\oplus x_n\}_{n\geqslant 1}$ is not a $K\oplus K$-frame for the super Hilbert space $H\oplus H$. \\
\end{example}

The following result is a sufficient condition for a sequence $\{x_n\oplus y_n\}_{n\geqslant 1}$ in $H_1\oplus H_2$ to be a $K\oplus L$-frame.
\begin{proposition}\label{Prop2.10}
Let $K\in B(H_1)$ and $L\in B(H_2)$. 
Let $\{x_n\}_{n\geqslant 1}$ be a $K$-frame for $H_1$ and $\{y_n\}_{n\geqslant 1}$ be a $L$-frame for $H_2$. 
Let $\theta_1$ and $\theta_2$ be the frame tronsforms of $\{x_n\}_{n\geqslant 1}$ and $\{y_n\}_{n\geqslant 1}$ respectively.\\ If $R(\theta_1) \perp R(\theta_2)$ in $\ell^2$, then $\{x_n\oplus y_n\}_{n\geqslant 1}$ is a $K\oplus L$-frame for $H_1\oplus H_2$.\\
\end{proposition}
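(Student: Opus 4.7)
The plan is to verify the two $K\oplus L$-frame inequalities directly using the formula from Proposition~\ref{Prop2.3}(ii), which expresses the frame transform of $\{x_n\oplus y_n\}$ as $\theta(x\oplus y)=\theta_1(x)+\theta_2(y)$. The orthogonality hypothesis $R(\theta_1)\perp R(\theta_2)$ is tailored precisely to turn this sum into a Pythagorean identity
\[
\sum_{n=1}^{+\infty}\vert\langle x\oplus y,x_n\oplus y_n\rangle\vert^2=\|\theta(x\oplus y)\|^2=\|\theta_1(x)\|^2+\|\theta_2(y)\|^2,
\]
which is the bridge between the ambient sequence and the two component sequences.

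For the upper (Bessel) bound, I use that $\{x_n\}$ and $\{y_n\}$ are Bessel with bounds $B_1,B_2$; the Pythagorean identity gives $\|\theta(x\oplus y)\|^2\le B_1\|x\|^2+B_2\|y\|^2\le B\|x\oplus y\|^2$ with $B=\max(B_1,B_2)$. For the lower bound, let $A_1,A_2$ be the lower $K$-frame and $L$-frame bounds of $\{x_n\}$ and $\{y_n\}$, and set $A=\min(A_1,A_2)$. Combining $A_1\|K^*x\|^2\le\|\theta_1(x)\|^2$ with $A_2\|L^*y\|^2\le\|\theta_2(y)\|^2$ and the Pythagorean identity gives
\[
A\bigl(\|K^*x\|^2+\|L^*y\|^2\bigr)\le\|\theta_1(x)\|^2+\|\theta_2(y)\|^2=\sum_{n=1}^{+\infty}\vert\langle x\oplus y,x_n\oplus y_n\rangle\vert^2.
\]

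To finish, I identify the left-hand side as $A\|(K\oplus L)^*(x\oplus y)\|^2$. By the preceding lemma, $(K\oplus L)^*=K^*\oplus L^*$, so $\|(K\oplus L)^*(x\oplus y)\|^2=\|K^*x\|^2+\|L^*y\|^2$, and the $K\oplus L$-frame lower bound follows. There is essentially no obstacle here: the whole argument is a one-line application of Proposition~\ref{Prop2.3}(ii) together with the orthogonality hypothesis, and the only thing to be careful about is keeping the two constants straight (taking the min for the lower bound and the max for the upper bound). The hypothesis $R(\theta_1)\perp R(\theta_2)$ is used in exactly one place, namely to split $\|\theta_1(x)+\theta_2(y)\|^2$ into $\|\theta_1(x)\|^2+\|\theta_2(y)\|^2$, and without it one would only obtain a one-sided bound via the triangle inequality, which would not suffice for the lower frame inequality.
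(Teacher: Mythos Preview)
Your proof is correct and follows essentially the same idea as the paper's: use the orthogonality hypothesis to kill the cross terms between the two component sequences, then apply the individual $K$- and $L$-frame lower bounds and combine via $(K\oplus L)^*=K^*\oplus L^*$. The only cosmetic difference is that the paper routes the computation through the frame operator --- deducing $T_1\theta_2=T_2\theta_1=0$ from $R(\theta_1)\perp R(\theta_2)$, so that by Proposition~\ref{Prop2.3}(iii) one gets $S=S_1\oplus S_2$ and hence $\langle S(x\oplus y),x\oplus y\rangle=\langle S_1x,x\rangle+\langle S_2y,y\rangle$ --- whereas you work directly with the analysis operator via Proposition~\ref{Prop2.3}(ii) and the Pythagorean identity $\|\theta_1(x)+\theta_2(y)\|^2=\|\theta_1(x)\|^2+\|\theta_2(y)\|^2$. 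Since $\langle S_ix,x\rangle=\|\theta_i(x)\|^2$, these are literally the same computation; your version is arguably a touch more transparent, and you also make the Bessel bound explicit (the paper relies implicitly on the earlier Bessel equivalence for that).
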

\begin{proof}
Denote by $A_1$ and $A_2$ the lower bounds for $\{x_n\}_{n\geqslant 1}$ and $\{y_n\}_{n\geqslant 1}$ respectively, and let $A=\min\{A_1, A_2\}$. 
We have for all $x\in H_1, y\in H_2$,  $\langle \theta_1(x),\theta_2 (y)\rangle =0$, then for all $x\in H_1, y\in H_2$, $\langle T_2\theta_1(x),y\rangle =0$ and $\langle T_1\theta_2(x),y\rangle =0$ where $T_1$ and $T_2$ are the synthesis operators for $\{x_n\}_{n\geqslant 1}$ and $\{y_n\}_{n\geqslant 1}$ respectively. Hence $T_2\theta_1=T_1\theta_2=0$.
Hence, by proposition \ref{Prop2.3}, the frame operator $S$ of $\{x_n\oplus y_n\}_{n\geqslant 1}$ is defined by $S(x\oplus y)=S_1(x)\oplus S_2(y)=(S_1\oplus S_2)(x\oplus y)$; where $S_1$ and $S_2$ are the frame operators for $\{x_n\}_{n\geqslant 1}$ and $\{y_n\}_{n\geqslant 1}$ respectively. \\
Then $$\begin{array}{rcl}
\langle S(x\oplus y),x\oplus y\rangle &=&\langle S_1(x),x\rangle +\langle S_2(y),y\rangle \\
&\geqslant& A_1\|K^*(x)\|^2+A_2\|L^*(y)\|^2\\
&\geqslant& A\|K^*(x)\oplus L^*(y)\|^2\\
&\geqslant& A\| (K\oplus L)^*(x\oplus y)\|^2.
\end{array}.$$
Hence $\{x_n\oplus y_n\}_{n\geqslant 1}$ is a $K\oplus L$-frame for $H_1\oplus H_2$.\\ 
\end{proof}
The following example is an example of a $K\oplus L$-frame in a super Hilbert space.
\begin{example}
Let $H$ be a Hilbert space in which $\{e_n\}_{n\geqslant 1}$ is an orthonormal basis. Let $M=\overline{span}\{e_{4n},\; n\geqslant 1\}$ and $N=\overline{span}\{e_{2n-1},\; n\geqslant 1\}$. Let $P$ and $Q$ be the orthogonal projections from $H$ onto $M$ and $N$ respectively. Then $\{P(e_n)\oplus Q(e_n)\}_{n\geqslant1}$ is a $P\oplus Q$-frame for $H\oplus H$. In fact, $\{P(e_n)\}_{n\geqslant 1}$ and $\{Q(e_n)\}_{n\geqslant 1}$ are $P$-frame and $Q$-frame, respectively, for $H$. Let $\theta_1$ and $\theta_2$ be their frame transforms  respectively. Then for all $x,y\in H$, we have: $$\begin{array}{rcl}
\langle\theta_1(x),\theta_2(y)\rangle&=& \displaystyle{\sum_{n=1}^{\infty}\langle x,P(e_n)\rangle\,\overline{\langle y,Q(e_n)\rangle}}\\
&=& \displaystyle{\sum_{n=1}^{\infty} \langle x,P(e_n)\rangle\,\langle Q(e_n),y\rangle}\\
&=& \displaystyle{\sum_{n=1}^{\infty}\langle Px,e_n\rangle\,\langle e_n,Qy\rangle}\\
&=& \langle Px,Qy\rangle \\
&=& 0 \hspace{2cm} (\text{ Since } M\perp N).
\end{array}$$
Then $R(\theta_1)\perp R(\theta_2)$. Hence, by proposition \ref{Prop2.10}, $\{P(e_n)\oplus Q(e_n)\}_{n\geqslant1}$ is a $P\oplus Q$-frame for $H\oplus H$.\\ 
\end{example}

The following result is another necessary condition for $\{x_n\oplus y_n\}_{n\geqslant1}$ to be $K\oplus L$-frame for the super Hilbert space.\
\begin{proposition}\label{Prop2.16}
Let $K\in B(H_1)$ and $L\in B(H_2)$. Let $\{x_n\}_{n\geqslant 1}$ be a $K$-frame for $H_1$ and $\{y_n\}_{n\geqslant 1}$ be a $L$-frame for $H_2$.
If $\{x_n \oplus y_n\}_{n\geqslant 1}$ is a $K\oplus L$-frame for $H_1\oplus H_2$, then:$$\left\lbrace
\begin{array}{rcl}
R(K) &\subset& T_1(\, N(T_2)\,),\\
R(L)&\subset& T_2(\,N(T_1)\,).
\end{array} 
\right.$$
Where $T_1$ and $T_2$ are the synthesis operators for $\{x_n\}_{n\geqslant 1}$ and $\{y_n\}_{n\geqslant 1}$ respectively.\\
\end{proposition}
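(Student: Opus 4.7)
The plan is to apply Proposition \ref{Prop1.9} to the sequence $\{x_n\oplus y_n\}_{n\geqslant 1}$ together with the operator $K\oplus L$, and then exploit the explicit form of the synthesis operator given in Proposition \ref{Prop2.3}(i).

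First, since $\{x_n\oplus y_n\}_{n\geqslant 1}$ is assumed to be a $K\oplus L$-frame, Proposition \ref{Prop1.9} yields
\[
R(K\oplus L)\subset R(T),
\]
where $T$ is the synthesis operator of $\{x_n\oplus y_n\}_{n\geqslant 1}$. A direct computation shows that $R(K\oplus L)=\{K(x)\oplus L(y):x\in H_1,\,y\in H_2\}$; in particular, for every $u\in R(K)$ the element $u\oplus 0$ belongs to $R(K\oplus L)$, and for every $v\in R(L)$ the element $0\oplus v$ belongs to $R(K\oplus L)$.

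Next I would invoke Proposition \ref{Prop2.3}(i), which tells us that $T(a)=T_1(a)\oplus T_2(a)$ for every $a\in\ell^2$. Given $u\in R(K)$, the inclusion $u\oplus 0\in R(T)$ provides some $a\in\ell^2$ with $T_1(a)=u$ and $T_2(a)=0$; the second equation says $a\in N(T_2)$ and the first then gives $u\in T_1(N(T_2))$, proving $R(K)\subset T_1(N(T_2))$. An entirely symmetric argument starting from $0\oplus v$ with $v\in R(L)$ produces a sequence $b\in N(T_1)$ with $T_2(b)=v$, hence $R(L)\subset T_2(N(T_1))$.

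I do not anticipate any real obstacle here; the whole argument hinges on recognising that, because of the diagonal decomposition $T(a)=T_1(a)\oplus T_2(a)$, being in the range of $T$ while having a zero in one coordinate forces the preimage to lie in the kernel of the other synthesis operator. The only slightly subtle point to make explicit is the identification of $R(K\oplus L)$ with all pairs $K(x)\oplus L(y)$, which is immediate from the definition of $K\oplus L$.
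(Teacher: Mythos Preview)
Your proposal is correct and follows essentially the same route as the paper's own proof: both invoke Proposition~\ref{Prop1.9} to obtain $R(K\oplus L)\subset R(T)$, use Proposition~\ref{Prop2.3}(i) to decompose $T(a)=T_1(a)\oplus T_2(a)$, and then specialise to elements of the form $u\oplus 0$ (respectively $0\oplus v$) to force the preimage into $N(T_2)$ (respectively $N(T_1)$). The only cosmetic difference is that the paper phrases the specialisation as ``take $y=0$'' and ``take $x=0$'' in $K(x)\oplus L(y)$, whereas you start directly from $u\in R(K)$; the content is identical.
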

\begin{proof} Assume that $\{x_n \oplus y_n\}_{n\geqslant 1}$ is a $K\oplus L$-frame for $H_1\oplus H_2$ and let $T$  its synthesis operator. 
By proposition \ref{Prop1.9}, $R(K\oplus L)\subset R(T)$. Then for all $x\oplus y\in H_1\oplus H_2$, there exists $a\in \ell^2$, such that $K\oplus L(x\oplus y)=T(a)$. That means, by proposition \ref{Prop2.3}, that for all $x\oplus y\in H_1\oplus H_2$, there exists $a\in \ell^2$ such that $K(x)\oplus L(y)=T_1(a)\oplus T_2(a)$.\\
By taking $y=0$, there exists $a\in \ell^2$, such that $K(x)=T_1(a)$ and $0=L(y)=T_2(a)$. Hence $K(x)=T_1(a)$ and $a\in N(T_2)$. Hence $R(K)\subset T_1(\,N(T_2)\,)$.\\
By taking $x=0$, there exists $a\in \ell^2$ such that $0=K(x)=T_1(a)$ and $L(y)=T_2(a)$. Hence $L(y)=T_2(a)$ and $a\in N(T_1)$. Hence $R(L)\subset T_2(\, N(T_1)\,)$.\\
\end{proof}
The following result shows that the non-minimality of the two sequences $\{x_n\}_{n\geqslant 1}$ and $\{y_n\}_{n\geqslant 1}$ is necessary for their direct sum to be a $K\oplus L$-frame whenever $K,L\neq 0$.
\begin{corollary}\label{Cor2.17}
Let $K\in B(H_1)$ and $L\in B(H_2)$. Let $\{x_n\oplus y_n\}_{n\geqslant 1}$ be a $K\oplus L$-frame for $H_1\oplus H_2$. Then: 
\begin{enumerate}
\item[i. ] If $\{x_n\}_{n\geqslant 1}$ is $K$-minimal, then $L=0$.
\item[ii. ] If $\{y_n\}_{n\geqslant 1}$ is $L$-minimal, then $K=0$.
\item[iii. ] If $\{x_n\}_{n\geqslant 1}$ is $K$-minimal and $\{y_n\}_{n\geqslant 1}$ is $L$-minimal, then $K=0$ and $L=0$.\\
\end{enumerate}
In particular: Let $K\neq 0$ and $L\neq 0$. If at least one of $\{x_n\}_{n\geqslant 1}$ and $\{y_n\}_{n\geqslant 1}$ is minimal, then $\{x_n\oplus y_n\}_{n\geqslant 1}$ is never a $K\oplus L$-frame for the super Hilbert space $H_1\oplus H_2$.\\
\end{corollary}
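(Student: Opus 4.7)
My plan is to derive Corollary~\ref{Cor2.17} as an immediate consequence of Proposition~\ref{Prop2.16}, whose inclusions $R(K)\subset T_1(N(T_2))$ and $R(L)\subset T_2(N(T_1))$ do almost all of the work. The key observation is that Corollary~\ref{Cor2.5} already guarantees that $\{x_n\}_{n\geq 1}$ and $\{y_n\}_{n\geq 1}$ are a $K$-frame and an $L$-frame respectively whenever $\{x_n\oplus y_n\}_{n\geq 1}$ is a $K\oplus L$-frame, so the hypotheses of Proposition~\ref{Prop2.16} are met automatically and I may freely talk about $T_1$, $T_2$ and their kernels.

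For item (i), I would assume $\{x_n\}_{n\geq 1}$ is $K$-minimal, which by definition means the synthesis operator $T_1$ is injective, i.e.\ $N(T_1)=\{0\}$. Plugging into the second inclusion from Proposition~\ref{Prop2.16} gives $R(L)\subset T_2(N(T_1))=T_2(\{0\})=\{0\}$, forcing $L=0$. Item (ii) is symmetric: $T_2$ injective yields $N(T_2)=\{0\}$, so $R(K)\subset T_1(N(T_2))=\{0\}$ and hence $K=0$. Item (iii) follows by applying (i) and (ii) simultaneously.

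For the ``In particular'' statement, I would simply contrapose: if $K\neq 0$ and $L\neq 0$, then (i) prevents $\{x_n\}_{n\geq 1}$ from being $K$-minimal (otherwise $L=0$), and (ii) prevents $\{y_n\}_{n\geq 1}$ from being $L$-minimal (otherwise $K=0$). Hence the existence of either minimality is incompatible with $\{x_n\oplus y_n\}_{n\geq 1}$ being a $K\oplus L$-frame.

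There is no real obstacle here; the only thing to be careful about is making sure Proposition~\ref{Prop2.16} is applicable, which reduces to citing Corollary~\ref{Cor2.5} to promote $\{x_n\}_{n\geq 1}$ and $\{y_n\}_{n\geq 1}$ to $K$- and $L$-frames. Everything else is a one-line set-theoretic deduction from the inclusions.
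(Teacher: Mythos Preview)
Your argument is correct and coincides with the paper's own proof: both derive the corollary directly from Proposition~\ref{Prop2.16} by observing that $N(T_1)=(0)$ (resp.\ $N(T_2)=(0)$) forces $R(L)\subset(0)$ (resp.\ $R(K)\subset(0)$), with item~(iii) and the final remark following by combining (i) and (ii). Your explicit invocation of Corollary~\ref{Cor2.5} to verify the hypotheses of Proposition~\ref{Prop2.16} is a nice touch that the paper leaves implicit.
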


\begin{proof}\hspace{0.2cm}
\begin{enumerate}
\item[i. ] Assume that $\{x_n\}_{n\geqslant 1}$ is $K$-minimal, then $N(T_1)=(0)$. By proposition \ref{Prop2.16}, we  obtain $R(L)\subset (0)$. Hence $L=0$.\\
\item[ii. ] Assume that $\{y_n\}_{n\geqslant 1}$ is $L$-minimal, then $N(T_2)=(0)$. By proposition \ref{Prop2.16}, we have that $R(K)\subset (0)$. hence $K=0$.\\
\item[iii. ] Direct result of i. and ii.
\end{enumerate}
\end{proof}
\begin{example}
Let $\{e_n\}_{n\geqslant 1}$ and $\{f_n\}_{n\geqslant 1}$ be  orthonormal bases for $H_1$ and $H_2$ respectively. Let $K\in B(H_1)$ such that $K(e_n)=e_{2n}$ and $L\in B(H_2)$ such that $L(f_n)=f_{2n-1}$ ($\forall n\geqslant 1$). If we take $\{x_n\}_{n\geqslant 1}:=\{e_{2n}\}_{n\geqslant 1}$ and $\{y_n\}_{n\geqslant 1}:=\{f_{2n-1}\}_{n\geqslant 1}$. $\{x_n\}_{n\geqslant 1}$ and $\{y_n\}_{n\geqslant 1}$ are $K$-frame for $H_1$ and $L$-frame for $H_2$ respectively. But $\{x_n\oplus y_n\}_{n\geqslant 1}$ is not a $K\oplus L$-frame for $H_1\oplus H_2$. In fact, it suffices to constate that $\{x_n\}_{n\geqslant 1}:=\{e_{2n}\}_{n\geqslant 1}$ is minimal.\\

\end{example}
\begin{corollary}
If $\{x_n\}_{n\geqslant 1}$ is a Riesz basis for a Hilbert space $H_1$, then there is no sequence $\{y_n\}_{n\geqslant1}$ in any Hilbert space $H_2$ such that $\{x_n\oplus y_n\}_{n\geqslant 1}$ is a frame for $H_1\oplus H_2$.\\
\end{corollary}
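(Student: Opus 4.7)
The strategy is to reduce this to Corollary \ref{Cor2.17}. First I would observe that an ordinary frame is exactly an $I$-frame (by part i.\ of the Remark following the definition of $K$-frames), and that the identity on the super Hilbert space decomposes as $I_{H_1\oplus H_2}=I_{H_1}\oplus I_{H_2}$ directly from the definition of $K\oplus L$. Thus any frame $\{x_n\oplus y_n\}_{n\geqslant 1}$ for $H_1\oplus H_2$ is the same thing as an $I_{H_1}\oplus I_{H_2}$-frame for $H_1\oplus H_2$.

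Next I would recall that a Riesz basis $\{x_n\}_{n\geqslant 1}$ for $H_1$ is, in particular, a frame for $H_1$ whose synthesis operator $T_1:\ell^2\to H_1$ is a bounded bijection, hence injective. In the language introduced before Proposition \ref{Prop1.18}, this says that $\{x_n\}_{n\geqslant 1}$ is $I_{H_1}$-minimal.

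Now I would argue by contradiction: assume $H_2\neq\{0\}$ and that some $\{y_n\}_{n\geqslant 1}\subset H_2$ makes $\{x_n\oplus y_n\}_{n\geqslant 1}$ a frame, i.e.\ an $I_{H_1}\oplus I_{H_2}$-frame, for $H_1\oplus H_2$. Since $I_{H_2}\neq 0$ and $\{x_n\}_{n\geqslant 1}$ is $I_{H_1}$-minimal, Corollary \ref{Cor2.17}(i) applied with $K=I_{H_1}$ and $L=I_{H_2}$ forces $I_{H_2}=0$, i.e.\ $H_2=\{0\}$, contradicting our assumption.

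There is essentially no technical obstacle; the only subtlety is the degenerate case $H_2=\{0\}$, where the statement is literally false because the zero sequence trivially gives a frame for $H_1\oplus\{0\}\cong H_1$. I would flag this at the start of the proof and state the argument under the tacit standing assumption that $H_2$ is nontrivial, which is the only interesting case.
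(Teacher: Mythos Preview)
Your approach is exactly the one in the paper: take $K=I_{H_1}$ and $L=I_{H_2}$ and invoke Corollary~\ref{Cor2.17}. Your write-up is more detailed (spelling out why a Riesz basis is $I_{H_1}$-minimal and why $I_{H_1\oplus H_2}=I_{H_1}\oplus I_{H_2}$), and your observation about the degenerate case $H_2=\{0\}$ is a valid caveat that the paper leaves implicit.
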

\begin{proof}
In this case $K$ and $L$ are the identities of the two Hilbert spaces $H_1$ and $H_2$ respectively. Corollary \ref{Cor2.17} completes the proof.\\
\end{proof}

\subsection{$K$-duaity, $L$-duality and $K\oplus L$-duality}
\medskip
\noindent

In this section, we show the relationship between  $K$-duality, $L$-duality and $K\oplus L$-duality.
\begin{proposition}\label{Prop2.20}
Let $K\in B(H_1)$ and $L\in B(H_2)$. \\Let $\{x_n \oplus y_n\}_{n\geqslant 1}$ be a $K\oplus L$-frame for $H_1\oplus H_2$. $\{a_n\oplus b_n\}_{n\geqslant 1}$ is a $K\oplus L$-dual frame to $\{x_n\oplus y_n\}_{n\geqslant 1}$, then $\{a_n\}_{n\geqslant 1}$ is a $K$-dual frame to $\{x_n\}_{n\geqslant 1}$ and $\{y_n\}_{n\geqslant 1}$ is a $L$-dual frame to  $\{y_n\}_{n\geqslant 1}$.\\
\end{proposition}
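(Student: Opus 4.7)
The plan is to reduce the conclusion to a direct coordinate-by-coordinate reading of the $K\oplus L$-dual identity. By hypothesis we have, for every $u\oplus v\in H_1\oplus H_2$,
$$(K\oplus L)(u\oplus v)=\sum_{n=1}^{+\infty}\langle u\oplus v,a_n\oplus b_n\rangle\,(x_n\oplus y_n),$$
and the task is to extract from this the two scalar identities $Kx=\sum\langle x,a_n\rangle x_n$ and $Ly=\sum\langle y,b_n\rangle y_n$.

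First, I would record a prerequisite: the definition of a $K\oplus L$-dual frame requires $\{a_n\oplus b_n\}_{n\geqslant 1}$ to be a Bessel sequence in $H_1\oplus H_2$, so by the direct-sum characterization of Bessel sequences established earlier, $\{a_n\}_{n\geqslant 1}$ and $\{b_n\}_{n\geqslant 1}$ are Bessel in $H_1$ and $H_2$, respectively. This guarantees that the sums in the intended conclusion make sense and converge unconditionally.

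Next I would specialize the displayed identity to vectors of the form $x\oplus 0$ and $0\oplus y$. Choosing $u=x$, $v=0$ gives
$$Kx\oplus 0=\sum_{n=1}^{+\infty}\langle x,a_n\rangle\,(x_n\oplus y_n),$$
and by Proposition \ref{Prop2.3}(i) applied to the sequence $\{\langle x,a_n\rangle\}_{n\geqslant 1}\in\ell^2$, the right-hand side equals $\sum\langle x,a_n\rangle x_n\oplus\sum\langle x,a_n\rangle y_n$. Matching first coordinates produces $Kx=\sum\langle x,a_n\rangle x_n$, so $\{a_n\}_{n\geqslant 1}$ is $K$-dual to $\{x_n\}_{n\geqslant 1}$. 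A symmetric computation with $u=0$, $v=y$ and matching of second coordinates produces $Ly=\sum\langle y,b_n\rangle y_n$, so $\{b_n\}_{n\geqslant 1}$ is $L$-dual to $\{y_n\}_{n\geqslant 1}$ (correcting the evident typographical slip in the statement).

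There is no genuine obstacle in this argument; the proof is essentially mechanical once one knows that synthesis commutes with the coordinate decomposition, which is exactly Proposition \ref{Prop2.3}(i). The only point that deserves explicit mention is the Bessel verification at the start, since without it the expressions $\sum\langle x,a_n\rangle x_n$ and $\sum\langle y,b_n\rangle y_n$ would not be \emph{a priori} well-defined as $K$-dual/$L$-dual representations.
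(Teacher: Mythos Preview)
Your proof is correct and follows essentially the same route as the paper's: both arguments start from the $K\oplus L$-dual identity, split the direct-sum coordinatewise, and specialize to $x\oplus 0$ and $0\oplus y$ to extract the two desired reconstruction formulas. Your version is marginally more explicit in citing Proposition~\ref{Prop2.3}(i) and in noting the Bessel prerequisite for $\{a_n\}$ and $\{b_n\}$, but the substance is identical.
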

\begin{proof}
Let $a_n\oplus b_n\in H_1\oplus H_2$ be a $K\oplus L$-dual frame to  $\{x_n\oplus y_n\}_{n\geqslant 1}$. Then for all $x\oplus y\in H_1\oplus H_2$, $$\begin{array}{rcl}
K\oplus L(x\oplus y)&=&\displaystyle{\sum_{n=1}^{+\infty} \langle x\oplus y, a_n\oplus b_n\rangle x_n\oplus y_n}\\
&=& \displaystyle{\sum_{n=1}^{+\infty} \langle x\oplus y,a_n\oplus b_n\rangle x_n}\oplus \displaystyle{\sum_{n=1}^{+\infty} \langle x\oplus y,a_n\oplus b_n\rangle y_n}.\\
\end{array}$$
Thus for all $x\in H_1$ and $y\in H_2$, $$\left\lbrace
\begin{array}{rcl}
K(x)&=&\displaystyle{\sum_{n=1}^{+\infty} \langle x\oplus y,a_n\oplus b_n\rangle x_n}.\\
L(y)&=&\displaystyle{\sum_{n=1}^{+\infty} \langle x\oplus y,a_n\oplus b_n\rangle y_n}.
\end{array}
\right.$$
In particular, by taking $y=0$ in the first equality and $x=0$ in the second one, we obtain: 
$$\left\lbrace
\begin{array}{rcl}
K(x)&=&\displaystyle{\sum_{n=1}^{+\infty} \langle x,a_n\rangle x_n},\\
L(y)&=&\displaystyle{\sum_{n=1}^{+\infty} \langle y,b_n\rangle y_n}.
\end{array}
\right.$$
Hence $\{a_n\}_{n\geqslant 1}$ is a $K$-dual frame to $\{x_n\}_{n\geqslant 1}$ and $\{b_n\}_{n\geqslant 1}$ is a $L$-dual frame to $\{y_n\}_{n\geqslant 1}$.\\
\end{proof}
\vspace{0.4cm}
One can wonder whether the converse of the above proposition holds. The following result shows that it does not always hold and gives a necessary and sufficient condition under which it holds. 

\begin{proposition}
Let $K\in B(H_1)$ and $L\in B(H_2)$. Let $\{x_n\}_{n\geqslant 1}$ be a $K$-frame for $H_1$ whose $\{f_n\}_{n\geqslant 1}$ is a $K$-dual frame and   $\{y_n\}_{n\geqslant 1}$ be a $L$-frame for $H_2$ whose $\{g_n\}_{n\geqslant 1}$ is a $L$-dual frame. Then the following statements are equivalent:
\begin{enumerate}
\item[i. ] $\{x_n\oplus y_n\}_{n\geqslant 1}$ is a $K\oplus L$-frame for $H_1\oplus H_2$ whose $\{f_n\oplus g_n\}_{n\geqslant 1}$ is a $K\oplus L$-dual frame.
\item[ii. ] $T_2\theta_1=0_{H_1}$ and $T_1\theta_2=0_{H_2}$.
\end{enumerate}
Where $T_1$ and $T_2$ are the synthesis operators of $\{x_n\}_{n\geqslant 1}$ and $\{y_n\}_{n \geqslant 1}$ respectively and $\theta_1$ and $\theta_2$ are the frame transforms of $\{f_n\}_{n\geqslant 1}$ and $\{g_n\}_{n\geqslant 1}$ repectively.\\
\end{proposition}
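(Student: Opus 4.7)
The plan is to expand the defining $K\oplus L$-dual identity in coordinates and watch the cross terms appear. Using $\langle x\oplus y, f_n\oplus g_n\rangle = \langle x,f_n\rangle + \langle y,g_n\rangle$ and then distributing over the direct sum, one obtains
$$
\sum_{n=1}^{+\infty} \langle x\oplus y,\, f_n\oplus g_n\rangle\,(x_n\oplus y_n) \;=\; \bigl[T_1\theta_1(x) + T_1\theta_2(y)\bigr]\,\oplus\,\bigl[T_2\theta_1(x) + T_2\theta_2(y)\bigr].
$$
Since $\{f_n\}$ is a $K$-dual to $\{x_n\}$ we have $T_1\theta_1(x)=K(x)$, and since $\{g_n\}$ is an $L$-dual to $\{y_n\}$ we have $T_2\theta_2(y)=L(y)$. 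The left-hand side should equal $(K\oplus L)(x\oplus y)=K(x)\oplus L(y)$, so the whole identity collapses to the requirement that the cross terms vanish: $T_1\theta_2(y)=0$ and $T_2\theta_1(x)=0$ for all $x\in H_1$ and $y\in H_2$, which is precisely condition (ii).

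For (ii)$\Rightarrow$(i), I first observe that both $\{x_n\oplus y_n\}_{n\geqslant 1}$ and $\{f_n\oplus g_n\}_{n\geqslant 1}$ are Bessel sequences in $H_1\oplus H_2$ by the proposition on direct sums of Bessel sequences proved earlier in this section. Under the assumption $T_1\theta_2=0$ and $T_2\theta_1=0$, the computation above reduces to the $K\oplus L$-dual equation, and by the characterization of $K$-frames through $K$-dual frames given in the preliminaries (a Bessel sequence is a $K$-frame iff it admits a $K$-dual), $\{x_n\oplus y_n\}_{n\geqslant 1}$ is a $K\oplus L$-frame with $\{f_n\oplus g_n\}_{n\geqslant 1}$ as one of its $K\oplus L$-duals. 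For (i)$\Rightarrow$(ii) the same decomposition works in reverse: the dual equation forces $K(x)+T_1\theta_2(y)=K(x)$ and $T_2\theta_1(x)+L(y)=L(y)$ for all $x$ and $y$ independently, giving $T_1\theta_2=0_{H_2}$ and $T_2\theta_1=0_{H_1}$.

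The only place that calls for care, and what I regard as the main (rather minor) obstacle, is the legitimacy of splitting the infinite series over direct sum components: this uses that $\{\langle x,f_n\rangle\}$ and $\{\langle y,g_n\rangle\}$ lie in $\ell^2$ combined with the Bessel property of $\{x_n\}$ and $\{y_n\}$, which together guarantee the convergence in the appropriate Hilbert space of each of the four scalar-weighted series $T_1\theta_1(x)$, $T_1\theta_2(y)$, $T_2\theta_1(x)$, $T_2\theta_2(y)$. Once this convergence is justified, the proof is a direct algebraic unpacking of the definition of $K\oplus L$-duality.
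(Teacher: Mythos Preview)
Your proof is correct and follows essentially the same route as the paper's own argument: both expand the $K\oplus L$-dual identity componentwise, recognize $T_1\theta_1(x)=K(x)$ and $T_2\theta_2(y)=L(y)$ from the individual dual relations, and reduce the equivalence to the vanishing of the cross terms $T_1\theta_2$ and $T_2\theta_1$. Your version is slightly more careful in explicitly invoking the Bessel property (to justify convergence and to apply the ``Bessel plus $K$-dual implies $K$-frame'' characterization in the direction (ii)$\Rightarrow$(i)), but the underlying computation is identical to the paper's.
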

\begin{proof}\hspace{0.2cm}
\begin{enumerate}
\item[i $\Longrightarrow $i.] Assume that $\{x_n\oplus y_n\}_{n\geqslant 1}$ is a $K\oplus L$-frame for $H_1\oplus H_2$ whose $\{f_n\oplus g_n\}_{n\geqslant 1}$ is a $K\oplus L$-dual frame.\\
Then for all $x\oplus y\in H_1\oplus H_2$, we have: $$K\oplus L(x\oplus y)=\displaystyle{\sum_{n=1}^{+\infty}\langle x\oplus y, f_n\oplus g_n\rangle x_n\oplus y_n.}$$
Hence for all $x\oplus y\in H_1\oplus H_2$, $$K(x)\oplus L(y)=\displaystyle{\sum_{n=1}^{\infty} \langle x,f_n\rangle x_n}+\displaystyle{\sum_{n=1}^{\infty} \langle y,g_n\rangle x_n  }\oplus \displaystyle{\sum_{n=1}^{\infty}\langle x,f_n\rangle y_n   }+\displaystyle{\sum_{n=1}^{\infty}\langle y,g_n\rangle y_n }.$$
That means that for all $x\oplus y\in H_1\oplus H_2$, $$K(x)\oplus L(y)=K(x)+\displaystyle{\sum_{n=1}^{\infty} \langle y,g_n\rangle x_n  }\oplus \displaystyle{\sum_{n=1}^{\infty}\langle x,f_n\rangle y_n   }+L(y).$$
Then for all $x\oplus y\in H_1\oplus H_2$, $$\left\lbrace
\begin{array}{rcl}
\displaystyle{\sum_{n=1}^{\infty} \langle y,g_n\rangle x_n  }&=&0,\\
\displaystyle{\sum_{n=1}^{\infty}\langle x,f_n\rangle y_n   }&=&0.
\end{array}
\right.$$
Thus for all  $ x\oplus y \in H_1\oplus H_2$, $T_1\theta_2(y)=0$ and $T_2\theta_1(x)=0$.\\
\item[ii $\Longrightarrow $ i. ] Assume that $T_2\theta_1=0_{H_1}$ and $T_1\theta_2=0_{H_2}$. Then for all $x\oplus y\in H_1\oplus H_2$, $$\left\lbrace
\begin{array}{rcl}
\displaystyle{\sum_{n=1}^{\infty}\langle x,f_n\rangle y_n   }&=&0\\
\displaystyle{\sum_{n=1}^{\infty} \langle y,g_n\rangle x_n  }&=&0.
\end{array}
\right.$$
In the other hand, we have for all $x\oplus y\in H_1\oplus H_2$, $$\begin{array}{rcl}
K\oplus L(x\oplus y)&=& K(x)\oplus L(y)\\
&=& \displaystyle{\sum_{n=1}^{+\infty} \langle x,f_n\rangle x_n }\oplus \displaystyle{\sum_{n=1}^{+\infty}\langle y,g_n\rangle y_n  }\\
&=& \displaystyle{\sum_{n=1}^{+\infty} \langle x,f_n\rangle x_n } +\displaystyle{\sum_{n=1}^{\infty} \langle y,g_n\rangle x_n  } \oplus \displaystyle{\sum_{n=1}^{\infty}\langle x,f_n\rangle y_n}+ \displaystyle{\sum_{n=1}^{+\infty}\langle y,g_n\rangle y_n}\\
&=& \displaystyle{\sum_{n=1}^{+\infty} \langle x\oplus y,f_n\oplus g_n\rangle x_n}\oplus \displaystyle{\sum_{n=1}^{+\infty}\langle x\oplus y,f_n\oplus g_n\rangle y_n}\\
&=&\displaystyle{\sum_{n=1}^{+\infty} \langle x\oplus y,f_n\oplus g_n\rangle x_n\oplus y_n}.
\end{array}$$
Hence $\{x_n\oplus y_n\}_{n\geqslant 1}$ is a $K\oplus L$-frame for $H_1\oplus H_2$ whose $\{f_n\oplus g_n\}_{n\geqslant 1}$ is a $K\oplus L$-dual frame.\\ 
\end{enumerate}
\end{proof}

\subsection{$K\oplus L$-minimal frames}
\medskip
\noindent

In what follows, we investigate $K\oplus L$-minimal frames for super Hilbert spaces.\\ \\

The following result gives a necessary and sufficient condition for a $M$-frame in $H_1\oplus H_2$ to be a $M$-minimal frame.
\begin{proposition}\label{Prop2.12}
Let $M\in B(H_1\oplus H_2)$. Let $\{x_n\oplus y_n\}_{n\geqslant 1}$ be a $M$-frame for $H_1\oplus H_2$, then the following statements are equivalent:
\begin{enumerate}
\item[i. ] $\{x_n\oplus y_n\}_{n\geqslant 1}$ is a $M$-minimal frame for $H_1\oplus H_2$.
\item[ii. ]  $N(T_1)\cap N(T_2)=(0).$
\end{enumerate}
where $T_1$ and $T_2$ are the synthesis operator of $\{x_n\}_{n\geqslant 1}$ and $\{y_n\}_{n \geqslant1}$ respectively.\\
\end{proposition}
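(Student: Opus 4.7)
The plan is to reduce the equivalence to a one-line identification of the kernel of the global synthesis operator with the intersection of the two component kernels, by invoking Proposition~\ref{Prop2.3}(i). The operator $M$ itself will play no active role beyond underwriting the $M$-frame hypothesis, since $M$-minimality is defined purely in terms of injectivity of the synthesis operator.

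First I would unpack what $M$-minimality means here: by the definition of a $K$-minimal frame given earlier in the paper, $\{x_n \oplus y_n\}_{n \geq 1}$ is $M$-minimal precisely when its synthesis operator $T : \ell^2 \to H_1 \oplus H_2$ is injective, i.e.\ $N(T) = (0)$. So the whole task is to describe $N(T)$ in terms of $N(T_1)$ and $N(T_2)$.

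Next I would apply Proposition~\ref{Prop2.3}(i), which provides the explicit formula $T(a) = T_1(a) \oplus T_2(a)$ for every $a \in \ell^2$. Since the zero element of $H_1 \oplus H_2$ is $0 \oplus 0$, the equation $T(a) = 0$ is equivalent to the simultaneous conditions $T_1(a) = 0$ and $T_2(a) = 0$, that is, $a \in N(T_1) \cap N(T_2)$. Hence $N(T) = N(T_1) \cap N(T_2)$, and the injectivity of $T$ is exactly the condition $N(T_1) \cap N(T_2) = (0)$, which gives the equivalence in both directions at once.

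I do not anticipate any real obstacle: the content of the proposition is entirely carried by the direct-sum formula for $T$ already established in Proposition~\ref{Prop2.3}. The one point requiring care is simply noting that a vector in $H_1 \oplus H_2$ vanishes iff both of its coordinates vanish, so the biconditional chain runs cleanly without needing to split the proof into separate implications.
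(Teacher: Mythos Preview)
Your proposal is correct and follows essentially the same approach as the paper: both invoke Proposition~\ref{Prop2.3}(i) to write $T(a)=T_1(a)\oplus T_2(a)$, deduce $N(T)=N(T_1)\cap N(T_2)$, and conclude the equivalence from the definition of $M$-minimality as injectivity of $T$. Your write-up is slightly more expansive but the logical content is identical.
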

\begin{proof}
By proposition \ref{Prop2.3}, for all $a\in \ell^2$, $T(a)=T_1(a)\oplus T_2(a)$ where $T$ is the synthesis operator of $\{x_n\oplus y_n\}_{n\geqslant1}$. Then $N(T)=N(T_1)\cap N(T_2)$. Hence:\\$\{x_n\oplus y_n\}_{n\geqslant 1}$ is a $M$-minimal frame $ \Longleftrightarrow    N(T)=(0)\Longleftrightarrow N(T_1)\cap N(T_2)=(0).$\\
\end{proof}
The following result gives a sufficient condition for $\{x_n\oplus y_n\}_{n\geqslant 1}$ to be a $K\oplus L$-minimal frame for the super Hilbert space.\\ \\

We will need the following lemma.
\begin{lemma}\label{Lem2.13}
Let $A,B\subset H$ where $H$ is a Hilbert space. Then the following statements are equivalent:\\
\begin{enumerate}
\item[i. ] $\overline{A}=B^\perp$.\\
\item[ii. ] $\left\lbrace
\begin{array}{rcl}
A \,&\perp&\, B\\
A^\perp \,&\cap&\, B^\perp=(0)
\end{array}
\right.$
\end{enumerate}
\end{lemma}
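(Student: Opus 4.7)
The plan is to prove the two implications separately, relying only on standard facts about orthogonal complements in a Hilbert space: for every subset $C\subset H$, $C^\perp$ is a closed subspace, $C^{\perp\perp}=\overline{\mathrm{span}}(C)$, and $H$ decomposes orthogonally as $H=\overline{\mathrm{span}}(C)\oplus C^\perp$.

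For (i)$\Rightarrow$(ii) I would first note that the inclusion $A\subset\overline{A}=B^\perp$ delivers $A\perp B$ at once. Taking orthogonal complements of both sides of $\overline{A}=B^\perp$ then yields $A^\perp=(\overline{A})^\perp=(B^\perp)^\perp=\overline{\mathrm{span}}(B)$, and therefore $A^\perp\cap B^\perp=\overline{\mathrm{span}}(B)\cap B^\perp=(0)$.

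For (ii)$\Rightarrow$(i) the inclusion $\overline{A}\subset B^\perp$ is immediate from $A\perp B$ combined with the closedness of $B^\perp$; the substantive part is the reverse inclusion, which is precisely where the hypothesis $A^\perp\cap B^\perp=(0)$ must be used. Given $x\in B^\perp$, I would decompose it along $H=\overline{\mathrm{span}}(A)\oplus A^\perp$ as $x=u+v$ with $u\in\overline{\mathrm{span}}(A)$ and $v\in A^\perp$. Since $\overline{\mathrm{span}}(A)\subset B^\perp$, both $u$ and $x$ lie in $B^\perp$, so $v=x-u\in B^\perp$ as well; but then $v\in A^\perp\cap B^\perp=(0)$, which forces $v=0$ and hence $x=u\in\overline{A}$.

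The only genuine subtlety I anticipate is the reading of $\overline{A}$ when $A$ is a general subset rather than a subspace; in the intended applications $A$ will be the range of a bounded linear operator, so $\overline{A}$ can be taken to be the closed linear span without ambiguity, and the argument above goes through verbatim. I do not expect any deeper obstacle: the entire content of the lemma is really the observation that $A^\perp\cap B^\perp=(0)$ is exactly the ingredient needed to upgrade the automatic inclusion $\overline{A}\subset B^\perp$ to equality.
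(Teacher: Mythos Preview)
Your proof is correct and follows essentially the same route as the paper's: both directions rest on the orthogonal decomposition $H=\overline{\mathrm{span}}(A)\oplus A^\perp$, and your (ii)$\Rightarrow$(i) argument matches the paper's line for line. The only cosmetic difference is in (i)$\Rightarrow$(ii), where the paper substitutes $B^\perp=\overline{A}$ directly to get $A^\perp\cap B^\perp=\overline{A}^\perp\cap\overline{A}=(0)$ rather than passing through $A^\perp=\overline{\mathrm{span}}(B)$ as you do; both are one-line computations. Your remark about reading $\overline{A}$ as the closed linear span is well taken: the lemma is false for general subsets (e.g.\ $A=\{(1,0)\}$, $B=\{(0,1)\}$ in $\mathbb{R}^2$), and the paper's proof tacitly makes the same assumption when it writes $H=\overline{A}\oplus^\perp\overline{A}^\perp$; since the only application is to $A=R(\theta_1)$, $B=R(\theta_2)$, this is harmless.
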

\begin{proof}\hspace{0.2cm}
\begin{enumerate}
\item[i$\Longrightarrow$ ii.] Assume that $\overline{A}=B^\perp$. Then $A\subset \overline{A}=B^\perp$. Hence $A\perp B$.\\
In the other hand, we have $A^\perp \cap B^\perp= \overline{A}^\perp \cap B^\perp=\overline{A}^\perp \cap \overline{A}=(0)$.\\
\item[ii$\Longrightarrow$ i. ] Assume that $\left\lbrace
\begin{array}{rcl}
A \,&\perp&\, B\\
A^\perp \,&\cap&\, B^\perp=(0).
\end{array}
\right.$\\
Since $A\,\perp \, B$, then $A\subset B^\perp$, hence $\overline{A}\subset B^\perp$. In the other hand, let $x\in B^\perp$, since $H=\overline{A}\oplus^\perp  \overline{A}^\perp=\overline{A}\oplus^\perp  A^\perp$. Then there exist $a\in \overline{A}$ and $b\in A^\perp$ such that $x=a\oplus^\perp \, b$. We have $b\in A^\perp$ and $b=x-a$ with $x\in B^\perp$ and $a\in \overline{A}\subset B^\perp$, then $b\in B^\perp$. Thus $b\in A^\perp \cap B^\perp=(0)$. Hence $x=a\in \overline{A}$. That means that $\overline{A}=B^\perp$.
\end{enumerate}
\end{proof}
\begin{proposition}\label{Prop2.14}
Let $K\in B(H_1)$ and $L\in B(H_2)$. 
Let $\{x_n\}_{n\geqslant 1}$ be a $K$-frame for $H_1$ and $\{y_n\}_{n\geqslant 1}$ be a $L$-frame for $H_2$. 
Let $\theta_1$ and $\theta_2$ be the frame tronsforms of $\{x_n\}_{n\geqslant 1}$ and $\{y_n\}_{n\geqslant 1}$ respectively.\\ If $\overline{R(\theta_1)}=R(\theta_2)^\perp$, then $\{x_n\oplus y_n\}_{n\geqslant 1}$ is a $K\oplus L$-minimal frame for $H_1\oplus H_2$.\\
\end{proposition}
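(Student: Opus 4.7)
The plan is to reduce the statement to the two previously established results, Proposition \ref{Prop2.10} (for the frame property) and Proposition \ref{Prop2.12} (for minimality), using Lemma \ref{Lem2.13} as the bridge between the hypothesis and both conclusions.

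First, I would apply Lemma \ref{Lem2.13} with $A = R(\theta_1)$ and $B = R(\theta_2)$. The hypothesis $\overline{R(\theta_1)} = R(\theta_2)^\perp$ is exactly condition (i) of the lemma, which is equivalent to the conjunction
\[
R(\theta_1) \perp R(\theta_2) \quad \text{and} \quad R(\theta_1)^\perp \cap R(\theta_2)^\perp = (0).
\]
These two consequences will supply the two ingredients needed.

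Next, from $R(\theta_1) \perp R(\theta_2)$, Proposition \ref{Prop2.10} directly yields that $\{x_n \oplus y_n\}_{n \geqslant 1}$ is a $K \oplus L$-frame for $H_1 \oplus H_2$. So it only remains to verify minimality.

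For the minimality, I would invoke Proposition \ref{Prop2.12}, which requires verifying $N(T_1) \cap N(T_2) = (0)$. The key identity is $N(T_i) = N(\theta_i^*) = R(\theta_i)^\perp$ for $i=1,2$, since $T_i = \theta_i^*$ by construction and the kernel of an adjoint is the orthogonal complement of the range. Substituting,
\[
N(T_1) \cap N(T_2) = R(\theta_1)^\perp \cap R(\theta_2)^\perp = (0),
\]
where the last equality is the second consequence of Lemma \ref{Lem2.13}. Proposition \ref{Prop2.12} then concludes that $\{x_n \oplus y_n\}_{n\geqslant 1}$ is $K\oplus L$-minimal. No serious obstacle is expected here, since the proof is essentially a clean assembly of the three preceding results; the only subtlety is the identification $N(T_i) = R(\theta_i)^\perp$, which is standard.
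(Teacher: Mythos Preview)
Your proposal is correct and follows essentially the same approach as the paper's own proof: apply Lemma \ref{Lem2.13} to extract $R(\theta_1)\perp R(\theta_2)$ and $R(\theta_1)^\perp\cap R(\theta_2)^\perp=(0)$, then invoke Proposition \ref{Prop2.10} for the frame property and Proposition \ref{Prop2.12} together with $N(T_i)=R(\theta_i)^\perp$ for minimality. The structure and all the key steps match the paper exactly.
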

\begin{proof}
By Lemma \ref{Lem2.13}, we have $R(\theta_1)\perp  R(\theta_2)$ and $R(\theta_1)^\perp \, \cap \, R(\theta_2)^\perp=(0)$.\\
Proposition \ref{Prop2.10} implies that $\{x_n\oplus y_n\}_{n\geqslant 1}$ is a $K\oplus L$-frame for $H_1\oplus H_2$. Since $N(\theta_1^*)=R(\theta_1)^\perp$ and $N(\theta_2^*)=R(\theta_2)^\perp$, then  $N(\theta_1^*)\cap N(\theta_2^*)=(0)$, that means that $N(T_1)\cap N(T_2)=(0)$ where $T_1$ and $T_2$ are the synthesis operators of $\{x_n\}_{n\geqslant 1}$ and $\{y_n\}_{n\geqslant 1}$ respectively. Hence, by proposition \ref{Prop2.12},  $\{x_n\oplus y_n\}_{n\geqslant 1}$ is a $K\oplus L$-minimal frame for $H_1\oplus H_2$. \\
\end{proof}
The following example is an example of a $K\oplus L$-minimal frame for a super Hilbert space.
\begin{example}\label{Ex2.15}\hspace{0.2cm}
Let $H_1$ and $H_2$ be two Hilbert spaces in which $\{e_n\}_{n\geqslant 1}$ and $\{f_n\}_{n\geqslant1}$ are orthonormal bases respectively.\\ \\
Let $\{x_n\}_{n\geqslant}\subset H_1$ be the sequence defined as follows: $\forall n\geqslant 1$, $x_n=\left\lbrace
\begin{array}{rcl}
&0& \text{ if } n \text{ is odd,}\\
&e_{n} &\text{ if } n \text{ is even.}
\end{array}
\right.$\\ \\
Let $\{y_n\}_{n\geqslant}\subset H_2$ be the sequence defined as follows: $\forall n\geqslant 1$, $y_n=\left\lbrace
\begin{array}{rcl}
&0& \text{ if } n \text{ is even,}\\
&f_{n} &\text{ if } n \text{ is odd.}
\end{array}
\right.$\\ \\
Let $K\in B(H_2)$ such that for all $n\geqslant1$, $K(e_n)=e_{2n}$ and $L\in B(H_2)$ such that for all $n\geqslant 1$, $L(f_n)=f_{2n-1}$. Then for all $x\in H_1$ , we have: $$
\begin{array}{rcl}
\|K^*(x)\|^2&=&\displaystyle{\sum_{n=1}^{\infty}\vert\langle x,K(e_n)\rangle\vert^2}\\
&=&\displaystyle{\sum_{n=1}^{\infty}\vert\langle x,e_{2n}\rangle \vert^2}\\
&=&\displaystyle{\sum_{n=1}^{\infty}\vert \langle x,x_n\rangle \vert^2}.\\
\end{array}$$
And  for all $y\in H_2$, we have: $$
\begin{array}{rcl}
\|L^*(y)\|^2&=&\displaystyle{\sum_{n=1}^{\infty}\vert \langle y,L(f_n)\rangle \vert^2}\\
&=&\displaystyle{\sum_{n=1}^{\infty}\vert\langle y,f_{2n-1}\rangle \vert^2}\\
&=&\displaystyle{\sum_{n=1}^{\infty}\vert\langle y,y_n\rangle \vert^2}.\\
\end{array}$$
Hence $\{x_n\}_{n\geqslant 1}$ is a $K$-frame for $H_1$ and $\{y_n\}_{n\geqslant 1}$ is a $L$-frame for $H_2$.\ Let's show, now, that $\{x_n\oplus y_n\}_{n\geqslant1}$ is a $K\oplus L$-minimal frame for $H_1\oplus H_2$. Let $\{\delta_n\}_{n\geqslant 1}$ be the standard orthonormal basis for $\ell^2$.\\ \\
We have for all $n\geqslant 1$, $\theta_1(e_n)=\left\lbrace
\begin{array}{rcl}
&0& \text{ if } n \text{ is odd,}\\
&\delta_n& \text{ if } n \text{ is even.}
\end{array}
\right.$\\ \\
That means that $\overline{R(\theta_1)}=\overline{span}\{\delta_{2n},\; n\geqslant 1\}$.\\ \\
We have for all $n\geqslant 1$, $\theta_2(f_n)=\left\lbrace
\begin{array}{rcl}
&0& \text{ if } n \text{ is even,}\\
&\delta_n& \text{ if } n \text{ is odd.}
\end{array}
\right.$\\ \\
That means that $\overline{R(\theta_2)}=\overline{span}\{\delta_{2n-1},\; n\geqslant 1\}$. 
Hence $\overline{R(\theta_1)}=R(\theta_2)^\perp$. Proposition \ref{Prop2.14} implies, then, that $\{x_n\oplus y_n\}_{n\geqslant1}$ is a $K\oplus L$-minimal frame for $H_1\oplus H_2$.
\\
\end{example}

\subsection{$K\oplus L$-orthonormal bases}
\medskip
\noindent

In what follows, we investigate   $K\oplus L$-orthonormal bases for super Hilbert spaces.\\
\begin{proposition}
Let $\{x_n\}_{n\geqslant 1}$ and $\{y_n\}_{n\geqslant 1}$ be two orthonormal systems in $H_1$ and $H_2$ respectively. Then $\{x_n\oplus y_n\}_{n\geqslant 1}$ is never an orthonormal system for the super Hilbert space $H_1\oplus H_2$.
\end{proposition}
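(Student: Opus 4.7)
The plan is to observe that the obstruction is already visible at the level of norms: no matter how the vectors $x_n$ and $y_n$ are chosen, the vectors $x_n\oplus y_n$ cannot have unit norm in $H_1\oplus H_2$.

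More precisely, I would argue as follows. By hypothesis, $\{x_n\}_{n\geq 1}$ is an orthonormal system in $H_1$ and $\{y_n\}_{n\geq 1}$ is an orthonormal system in $H_2$, so in particular $\|x_n\|=\|y_n\|=1$ for every $n\geq 1$. Using the definition of the inner product on $H_1\oplus H_2$, one computes
\[
\|x_n\oplus y_n\|^2 \;=\; \langle x_n\oplus y_n,\, x_n\oplus y_n\rangle \;=\; \|x_n\|^2 + \|y_n\|^2 \;=\; 2,
\]
so $\|x_n\oplus y_n\|=\sqrt{2}\neq 1$. Since an orthonormal system must consist of unit vectors, the sequence $\{x_n\oplus y_n\}_{n\geq 1}$ fails to be orthonormal in $H_1\oplus H_2$.

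There is essentially no obstacle here: the statement is a one-line normalization check, and the only thing to be careful about is writing it as a strict implication (the orthogonality between distinct $x_n\oplus y_n$ and $x_m\oplus y_m$ may or may not hold, but the unit-norm condition is violated uniformly, which is what kills the claim). I would present the proof as the single displayed calculation above, followed by the remark that the norm condition in the definition of an orthonormal system is not met, hence the conclusion.
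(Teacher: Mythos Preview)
Your proof is correct and matches the paper's own argument essentially verbatim: both simply compute $\langle x_n\oplus y_n,\,x_n\oplus y_n\rangle=\|x_n\|^2+\|y_n\|^2=2\neq 1$ and conclude that the unit-norm requirement fails.
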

\begin{proof}
For any $n\geqslant 1$,  $\langle x_n\oplus y_n,x_n\oplus y_n\rangle =\langle x_n,x_n\rangle+\langle y_n,y_n\rangle=2\neq 1.$\\
\end{proof}

The following result is a necessary condition on $\{x_n\}_{n\geqslant 1}$ and $\{y_n\}_{n\geqslant 1}$ for $\{x_n\oplus y_n\}_{n \geqslant1}$ to be a $K\oplus L$-orthonormal basis.
\begin{proposition}\label{Prop2.23}
Let $K\in B(H_1)$ and $L\in B(H_2)$. Let $\{x_n\oplus y_n\}_{n\geqslant 1}\subset H_1\oplus H_2$.\\
\begin{enumerate}
\item[i. ] If $K=0$ and $L\neq 0$. Then:\\
$\{x_n\oplus y_n\}_{n\geqslant 1}$ is $K\oplus L$-orthonormal basis $\Longleftrightarrow$ $\left\lbrace
\begin{array}{rcl}
&1) &\forall n\geqslant 1, x_n=0.\\
&2) &\{y_n\}_{n\geqslant 1} \text{ is a }\text{$L$-orthonormal}\\
&&\text{ basis for } H_2.
\end{array}
\right.$
\item[ii. ] If $K\neq0$ and $L= 0$. Then:\\
$\{x_n\oplus y_n\}_{n\geqslant 1}$ is $K\oplus L$-orthonormal basis $\Longleftrightarrow$ $\left\lbrace
\begin{array}{rcl}
&1)&\forall n\geqslant 1, y_n=0.\\
&2)&\{x_n\}_{n\geqslant 1} \text{ is a }\text{$K$-orthonormal}\\
&& \text{basis for } H_1.
\end{array}
\right.$\\
\item[iii. ] If $K\neq0$ and $L\neq 0$. If $\{x_n\oplus y_n\}_{n\geqslant 1}$ is a $K\oplus L$-orthonormal basis, then:\\
\begin{enumerate}
\item $\{K^*(x_n)\oplus L^*(y_n)\}_{n\geqslant 1}$ is the unique $K\oplus L$-dual frame to $\{x_n\oplus y_n\}_{n\geqslant 1}$.
\item $\{x_n\}_{n\geqslant 1}$ is  a non-minimal $K$-frame for $H_1$ whose $\{K^*(x_n)\}_{n\geqslant 1}$ is a $K$-dual frame.
\item $\{y_n\}_{n\geqslant 1}$ is a non-minimal $L$-frame for $H_2$ whose $\{L^*(y_n)\}_{n\geqslant 1}$ is a $L$-dual frame.\\
\end{enumerate}
\end{enumerate}
\end{proposition}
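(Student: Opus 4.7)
The three cases all reduce to unpacking the defining orthonormal-plus-Parseval property of a $K\oplus L$-orthonormal basis and feeding it into the adjoint identity $(K\oplus L)^*=K^*\oplus L^*$ together with Propositions \ref{Prop1.18} and \ref{Prop2.20} and Corollary \ref{Cor2.17}.

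For part (i), the plan is to specialize the Parseval identity. When $K=0$, the adjoint formula gives $(K\oplus L)^*(x\oplus y)=0\oplus L^*(y)$, so the Parseval condition for $\{x_n\oplus y_n\}$ becomes
\[
\|L^*(y)\|^2=\sum_{n=1}^{+\infty}|\langle x,x_n\rangle+\langle y,y_n\rangle|^2\qquad\forall x\in H_1,\;\forall y\in H_2.
\]
Setting $y=0$ forces $\sum_n|\langle x,x_n\rangle|^2=0$ for every $x\in H_1$, hence $x_n=0$ for all $n$. Substituting this back, the identity reduces to $\|L^*(y)\|^2=\sum_n|\langle y,y_n\rangle|^2$, so $\{y_n\}$ is a Parseval $L$-frame; and the hypothesis that $\{0\oplus y_n\}$ be orthonormal in $H_1\oplus H_2$ immediately forces $\{y_n\}$ to be orthonormal in $H_2$, so $\{y_n\}$ is an $L$-orthonormal basis. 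The converse direction is a direct verification using the same two displayed identities. Part (ii) is entirely symmetric, with the roles of $H_1$ and $H_2$ interchanged.

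For part (iii), I would first invoke Proposition \ref{Prop1.18} applied to the operator $K\oplus L\in B(H_1\oplus H_2)$ and the $K\oplus L$-orthonormal basis $\{x_n\oplus y_n\}$: this yields that $\{(K\oplus L)^*(x_n\oplus y_n)\}=\{K^*(x_n)\oplus L^*(y_n)\}$ is the unique $K\oplus L$-dual frame to $\{x_n\oplus y_n\}$, which is exactly statement~(a). Next, Proposition \ref{Prop2.20} applied to this dual decomposition gives at once that $\{K^*(x_n)\}$ is a $K$-dual of $\{x_n\}$ (so $\{x_n\}$ is automatically a $K$-frame) and $\{L^*(y_n)\}$ is an $L$-dual of $\{y_n\}$ (so $\{y_n\}$ is automatically an $L$-frame).

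It remains to establish the non-minimality assertions in (b) and (c), and for this I would argue by contrapositive using Corollary \ref{Cor2.17}: since $K\neq 0$, $L\neq 0$, and $\{x_n\oplus y_n\}$ is in particular a $K\oplus L$-frame, neither $\{x_n\}$ can be $K$-minimal nor $\{y_n\}$ can be $L$-minimal. I do not anticipate any genuine technical obstacle; the main care required is in part~(i), where one must isolate the orthonormality condition from the Parseval condition and exploit $K=0$ inside the adjoint formula at the right moment to extract $x_n=0$ before the $L$-frame structure of $\{y_n\}$ can be read off.
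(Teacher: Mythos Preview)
Your proposal is correct and follows essentially the same route as the paper: part (i) is proved by setting $y=0$ in the Parseval identity to force $x_n=0$ and then reading off the $L$-orthonormal basis structure of $\{y_n\}$, and part (iii) is obtained from Proposition~\ref{Prop1.18}, Proposition~\ref{Prop2.20}, and Corollary~\ref{Cor2.17} exactly as you outline. The only cosmetic difference is that the paper also cites Corollary~\ref{Cor2.5} to say $\{x_n\}$ and $\{y_n\}$ are $K$- and $L$-frames before invoking Proposition~\ref{Prop2.20}, whereas you infer this directly from the existence of a dual; both arguments are valid.
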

\begin{proof}\hspace{0.2cm}
\begin{enumerate}
\item[i. ] Assume that  $K=0$ and $L\neq 0$. And 
assume that $\{x_n\oplus y_n\}_{n\geqslant 1}$ is a \\$K\oplus L$-orthonormal basis. Then for all $x\oplus y\in H_1\oplus H_2$, $$\|L^*(y)\|^2=\|K^*(x)\oplus L^*(y)\|^2=\displaystyle{\sum_{n=1}^{\infty}\vert \langle x\oplus y,x_n\oplus y_n\rangle\vert^2}.$$ Taking $y=0$, we obtain $0=\displaystyle{\sum_{n=1}^{\infty}\vert\langle x,x_n\rangle\vert^2}$. Hence $\langle x,x_n\rangle =0$ for\\ any $x$ in $H_1$. Hence for all $n\geqslant 1$, $x_n=0$. Then for all $y\in H_2$, $\|L^*(y)\|^2\\=\displaystyle{\sum_{n=1}^{\infty}\vert\langle y,y_n\rangle \vert^2}$ and for all $n,m\geqslant 1$, $\langle y_n,y_m\rangle =\langle x_n\oplus y_n,x_m\oplus y_m\rangle=\delta_{n,m}$. \\Hence $\{y_n\}_{n\geqslant 1}$ is a $L$-orthonormal basis for $H_2$.
The converse is clear.\\
\item[ii. ] By the same way followed in i.\\
\item[iii. ] Assume that $K\neq 0$ and $L\neq 0$.\\
By proposition \ref{Prop1.18}, the unique dual to $\{x_n\oplus y_n\}_{n\geqslant}$ is $\{K^*(x_n)\oplus L^*(y_n)\}_{n\geqslant 1}$.\\
By corollary \ref{Cor2.5} and corollary \ref{Cor2.17}, $\{x_n\}_{n\geqslant 1}$ and $\{y_n\}_{n\geqslant 1}$ are non-minimal $K$-frame for $H_1$ and $L$-frame for $H_2$ respectively. By (a) and proposition \ref{Prop2.20}, we have that $\{K^*(x_n)\}_{n\geqslant 1}$ is a $K$-dual frame to $\{x_n\}_{n\geqslant 1}$ and $\{L^*(y_n)\}_{n 1}$ is a $L$-dual frame to $\{y_n\}_{n\geqslant 1}$.

\end{enumerate}
\end{proof}
By the above proposition, we deduce that if $\{x_n\oplus y_n\}_{n\geqslant 1}$ is a $K\oplus L$-orthonormal basis for $H_1\oplus H_2$, then $\{K^*(x_n)\oplus L^*(y_n)\}_{n\geqslant 1}$ is a $K^*\oplus L^*$-frame for $H_1\oplus H_2$. One can wonder under which conditions $\{K^*(x_n)\oplus L^*(y_n)\}_{n\geqslant 1}$ is a $K^*\oplus L^*$-orthonormal basis.\\ \\
we will need the following lemmas.
\begin{lemma}
Let $K\in B(H_1)$ and $L\in B(H_2)$. Then the following statements are equivalent: 
\begin{enumerate}
\item[i. ] $K\oplus L$ is an isometry in  $H_1\oplus H_2$.
\item[ii. ] $K$ and $L$ are both isometries.\\ \\
\end{enumerate}
\end{lemma}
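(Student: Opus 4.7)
The plan is to use the norm characterization of isometries together with the orthogonal decomposition of the super Hilbert space norm, namely $\|x\oplus y\|^2 = \|x\|^2 + \|y\|^2$ and $\|(K\oplus L)(x\oplus y)\|^2 = \|K(x)\|^2 + \|L(y)\|^2$. Both directions will drop out almost immediately from these two identities.

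For the direction (ii)$\Longrightarrow$(i), I would assume $K$ and $L$ are isometries, so $\|K(x)\| = \|x\|$ and $\|L(y)\| = \|y\|$ for all $x\in H_1$, $y\in H_2$. Plugging these into the expression for $\|(K\oplus L)(x\oplus y)\|^2$ gives $\|x\|^2 + \|y\|^2 = \|x\oplus y\|^2$, so $K\oplus L$ is an isometry.

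For the direction (i)$\Longrightarrow$(ii), I would use the two test vectors $x\oplus 0$ and $0\oplus y$. Assuming $K\oplus L$ is an isometry, the identity $\|(K\oplus L)(x\oplus 0)\|^2 = \|x\oplus 0\|^2$ reduces to $\|K(x)\|^2 = \|x\|^2$, so $K$ is an isometry; symmetrically, testing with $0\oplus y$ forces $L$ to be an isometry. Alternatively, one can argue more structurally using the preceding lemma $(K\oplus L)^*=K^*\oplus L^*$: then $(K\oplus L)^*(K\oplus L) = K^*K \oplus L^*L$, which equals $I_{H_1\oplus H_2} = I_{H_1}\oplus I_{H_2}$ if and only if $K^*K = I_{H_1}$ and $L^*L = I_{H_2}$, giving the equivalence in one line.

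There is no real obstacle here; the argument is essentially a direct unpacking of definitions, and the only thing to be careful about is to explicitly invoke the earlier lemma computing $(K\oplus L)^*$ if one uses the $T^*T = I$ formulation. I would prefer the elementary norm-based argument above, since it avoids any extra appeal and makes the test-vector trick (restricting to $H_1\oplus 0$ and $0\oplus H_2$) completely transparent.
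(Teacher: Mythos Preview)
Your proposal is correct. The paper's own proof uses exactly your ``alternative'' route: it invokes the earlier lemma $(K\oplus L)^*=K^*\oplus L^*$ and reads off that $(K\oplus L)^*(K\oplus L)(x\oplus y)=K^*K(x)\oplus L^*L(y)$, so $(K\oplus L)^*(K\oplus L)=I_{H_1\oplus H_2}$ if and only if $K^*K=I_{H_1}$ and $L^*L=I_{H_2}$. Your primary norm-based argument with the test vectors $x\oplus 0$ and $0\oplus y$ is an equally valid and arguably more elementary packaging of the same idea, avoiding the explicit appeal to the adjoint lemma.
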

\begin{proof} \hspace{0.2cm}
\begin{enumerate}
\item[i $\Longrightarrow$ ii. ] Assume that $K\oplus L$ is an isometry in  $H_1\oplus H_2$. Then for all $x\oplus y\in H_1\oplus H_2$, we have: $$\begin{array}{rcl}
x\oplus y&=&(K\oplus L)^*(K\oplus L)(x\oplus y)\\
&=& (K^*\oplus L^*)(K(x)\oplus L(y))\\
&=& K^*K(x)\oplus L^*L(y).
\end{array}$$
Hence For all $x\in H_1$, $x=K^*K(x)$ and for all $y\in H_2$, $y=L^*L(y)$. That means that $K^*K=I_{H_1}$ and $L^*L=I_{H_2}$. \\
Hence $K$ and $L$ are both isometries.
\item[ii $\Longrightarrow $ i.] Assume that $K$ and $L$ are both isometries. Then $K^*K=I_{H_1}$ and $L^*L=I_{H_2}$. Hence for all $x\oplus y\in H_1\oplus H_2$, $$\begin{array}{rcl}
(K\oplus L)^*(K\oplus L)(x\oplus y)&=& (K^*\oplus L^*)(K(x)\oplus L(y))\\
&=& K^*K(x)\oplus L^*L(y)\\
&=& x\oplus y.
\end{array}$$
Then $(K\oplus L)^*(K\oplus L)=I_{H_1\oplus H_2}$. That means that $K\oplus L$ is an isometry in $H_1\oplus H_2$.\\
\end{enumerate}
\end{proof}
\begin{lemma}\label{Lem2.25}
Let $K\in B(H_1)$ and $L\in B(H_2)$. Then the following statements are equivalent: 
\begin{enumerate}
\item[i. ] $K\oplus L$ is a co-isometry in  $H_1\oplus H_2$.
\item[ii. ] $K$ and $L$ are both co-isometries.\\
\end{enumerate}
\end{lemma}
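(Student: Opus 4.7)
My plan is to reduce this lemma to the previous one (on isometries) by passing to adjoints, which will make the argument essentially a one-line observation rather than a repeat of the earlier direct computation.

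The key fact I will exploit is that an operator $T$ is a co-isometry precisely when its adjoint $T^*$ is an isometry; this is just the definition $TT^* = I$ rewritten as $(T^*)^*T^* = I$. Applied here, $K\oplus L$ is a co-isometry if and only if $(K\oplus L)^*$ is an isometry. By the lemma proved just before (asserting $(K\oplus L)^* = K^*\oplus L^*$), this is equivalent to $K^*\oplus L^*$ being an isometry on $H_1\oplus H_2$. Then the preceding lemma (isometry version) applies verbatim to the pair $K^*, L^*$ and tells me this holds if and only if $K^*$ and $L^*$ are both isometries, i.e., if and only if $K$ and $L$ are both co-isometries.

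So the proof writes itself as a short chain of equivalences, with the two nontrivial inputs being the preceding lemma on isometries and the identity $(K\oplus L)^*=K^*\oplus L^*$ established earlier. There is no real obstacle; the only thing to be careful about is invoking each equivalence in the right direction and making sure no claim about $K$ and $L$ themselves (as opposed to their adjoints) is smuggled in without justification. If, for stylistic symmetry with the isometry lemma, one prefers a self-contained direct proof rather than a reduction, one can instead compute $(K\oplus L)(K\oplus L)^*(x\oplus y) = (K\oplus L)(K^*(x)\oplus L^*(y)) = KK^*(x)\oplus LL^*(y)$ and note that this equals $x\oplus y$ for all $x\oplus y$ if and only if $KK^* = I_{H_1}$ and $LL^* = I_{H_2}$; this is the exact analogue of the isometry proof with $K,L$ and $K^*,L^*$ interchanged, and would fit seamlessly into the paper's style.
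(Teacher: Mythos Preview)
Your proposal is correct and follows essentially the same approach as the paper: the paper's own proof is the one-line observation that the result is ``a direct result of the above lemma and the fact that $(K\oplus L)^*=K^*\oplus L^*$,'' which is exactly the reduction to adjoints that you carry out. Your optional self-contained computation is also fine, but the paper does not bother with it.
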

\begin{proof}
It is a direct result of the above lamma and the fact that $(K\oplus L)^*=K^*\oplus L^*$.\\

\end{proof}
\begin{proposition}\label{Prop2.26}
Let $K\in B(H_1)$ and $L\in B(H_2)$. \\Let $\{x_n\oplus y_n\}_{n\geqslant 1}$ be a $K\oplus L$-orthonormal basis for the super Hilbert space $H_1\oplus H_2$. Then the following statements are equivalent: 
\begin{enumerate}
\item[i. ] $\{K^*(x_n)\oplus L^*(y_n)\}_{n\geqslant 1}$ is a $K^*\oplus L^*$-orthonormal basis.
\item[ii. ] $K$ and $L$ are both co-isometries.\\
\end{enumerate}
\end{proposition}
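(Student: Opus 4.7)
The proof is a direct assembly of Proposition \ref{Prop1.19} and Lemma \ref{Lem2.25}. The plan is to apply Proposition \ref{Prop1.19} to the single operator $K\oplus L$ acting on the super Hilbert space $H_1\oplus H_2$, and then translate the resulting co-isometry condition on $K\oplus L$ into a joint condition on $K$ and $L$ via Lemma \ref{Lem2.25}.

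First I would observe that the (unlabeled) lemma preceding Proposition \ref{Prop2.8} gives the adjoint identity $(K\oplus L)^* = K^* \oplus L^*$. Consequently, for every $n \geqslant 1$,
\[
(K\oplus L)^*(x_n\oplus y_n) \;=\; K^*(x_n)\oplus L^*(y_n),
\]
so the sequence appearing in statement $(i)$ coincides with $\{(K\oplus L)^*(x_n\oplus y_n)\}_{n\geqslant 1}$. Thus statement $(i)$ is precisely the assertion that this sequence is a $(K\oplus L)^*$-orthonormal basis of $H_1\oplus H_2$.

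Next I would invoke Proposition \ref{Prop1.19} with the operator $K\oplus L \in B(H_1\oplus H_2)$ and the $K\oplus L$-orthonormal basis $\{x_n\oplus y_n\}_{n\geqslant 1}$ (which exists by hypothesis). That proposition says exactly that $\{(K\oplus L)^*(x_n\oplus y_n)\}_{n\geqslant 1}$ is a $(K\oplus L)^*$-orthonormal basis if and only if $K\oplus L$ is a co-isometry on $H_1\oplus H_2$.

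Finally, Lemma \ref{Lem2.25} characterizes this condition by stating that $K\oplus L$ is a co-isometry if and only if both $K$ and $L$ are co-isometries. Chaining the two equivalences yields $(i)\Longleftrightarrow (ii)$. There is no substantive obstacle here: the only manipulation required is the rewriting of $(K\oplus L)^*(x_n\oplus y_n)$, which is immediate from the direct-sum adjoint identity, and all nontrivial content has been packaged into the two earlier results being cited.
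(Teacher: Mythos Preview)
Your proof is correct and matches the paper's own argument essentially line for line: rewrite $K^*(x_n)\oplus L^*(y_n)$ as $(K\oplus L)^*(x_n\oplus y_n)$, apply Proposition~\ref{Prop1.19} to the operator $K\oplus L$, and finish with Lemma~\ref{Lem2.25}. There is nothing to add.
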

	\begin{proof}
	$$\begin{array}{rcl}
	&&\{K^*(x_n)\oplus L^*(y_n)\}_{n\geqslant 1} \text{ is a $K^*\oplus L^*$-orthonormal basis. }\\
	&\Longleftrightarrow& \{K^*\oplus L^*(x_n\oplus y_n)\}_{n\geqslant 1} \text{ is a $K^*\oplus L^*$-orthonormal basis.}\\
	&\Longleftrightarrow& K\oplus L \text{ is a co-isometry (Proposition \ref{Prop1.19})}\\
	&\Longleftrightarrow& K \text{ and } L \text{ are both co-isometries (Lemma \ref{Lem2.25}).}
	\end{array}$$
	
	\end{proof}
The following corollary gives some necessary conditions for the $K\oplus L$-dual frame to a $K\oplus L$-orthonormal basis to be a $K^*\oplus L^*$-orthonormal basis. 
\begin{corollary}
Let $K\in B(H_1)$ and $L\in B(H_2)$. \\Let $\{x_n\oplus y_n\}_{n\geqslant 1}$ be a $K\oplus L$-orthonormal basis for the super Hilbert space $H_1\oplus H_2$. If $\{K^*(x_n)\oplus L^*(y_n)\}_{n\geqslant 1}$ is a $K^*\oplus L^*$-orthonormal basis, then the pair $(\{x_n\}_{n\geqslant 1},\{y_n\}_{n\geqslant 1})$ is a complete strongly disjoint pair of Parseval frames. i.e. $\{x_n\}_{n\geqslant 1}$ and $\{y_n\}_{n\geqslant 1}$ are Parseval frames for $H_1$ and $H_2$ respectively, and $\{x_n \oplus y_n\}_{n\geqslant 1}$ is an orthonormal basis for $H_1\oplus H_2$. In particular, $ R(\theta_1)$ and $R(\theta_2)$ are closed subspaces of $\ell^2$ and  $R(\theta_1)=R(\theta_2)^\perp$ in $\ell^2$,\\
where $\theta_1$ and $\theta_2$ are the frame transforms for $\{x_n\}_{n\geqslant 1}$ and $\{y_n\}_{n\geqslant 1}$ respectively.\\
\end{corollary}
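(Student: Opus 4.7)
The plan is to begin by applying Proposition~\ref{Prop2.26}, which under the hypothesis forces $K$ and $L$ to be co-isometries; hence $K^*$ and $L^*$ are isometries, and consequently $(K\oplus L)^*=K^*\oplus L^*$ is an isometry on $H_1\oplus H_2$. Combining this with the defining Parseval $K\oplus L$-frame identity of a $K\oplus L$-orthonormal basis yields, for every $x\oplus y\in H_1\oplus H_2$,
$$\|x\oplus y\|^2=\|(K\oplus L)^*(x\oplus y)\|^2=\sum_{n=1}^{+\infty}|\langle x\oplus y,x_n\oplus y_n\rangle|^2.$$

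Next I would specialize this identity. Taking $y=0$ gives $\|x\|^2=\sum_{n=1}^{+\infty}|\langle x,x_n\rangle|^2$ for every $x\in H_1$, so $\{x_n\}_{n\geqslant 1}$ is a Parseval frame for $H_1$; symmetrically, $x=0$ shows $\{y_n\}_{n\geqslant 1}$ is a Parseval frame for $H_2$. Moreover, $\{x_n\oplus y_n\}_{n\geqslant 1}$ is by definition an orthonormal system in $H_1\oplus H_2$ and now satisfies Parseval's identity on all of $H_1\oplus H_2$; any such orthonormal system is complete, so it is an orthonormal basis of $H_1\oplus H_2$.

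For the ``in particular'' part, since $\{x_n\}_{n\geqslant 1}$ and $\{y_n\}_{n\geqslant 1}$ are Parseval frames, their analysis operators $\theta_1$ and $\theta_2$ are isometries, so $R(\theta_1)$ and $R(\theta_2)$ are closed subspaces of $\ell^2$. The frame transform $\theta$ of the orthonormal basis $\{x_n\oplus y_n\}_{n\geqslant 1}$ is a unitary map $H_1\oplus H_2\to\ell^2$, and by Proposition~\ref{Prop2.3}(ii), $\theta(x\oplus y)=\theta_1(x)+\theta_2(y)$. Equating
$$\|x\|^2+\|y\|^2=\|x\oplus y\|^2=\|\theta_1(x)+\theta_2(y)\|^2=\|x\|^2+2\,\mathrm{Re}\langle\theta_1(x),\theta_2(y)\rangle+\|y\|^2$$
forces $\mathrm{Re}\langle\theta_1(x),\theta_2(y)\rangle=0$ for all $x,y$; substituting $iy$ for $y$ eliminates the imaginary part as well, hence $R(\theta_1)\perp R(\theta_2)$. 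Surjectivity of the unitary $\theta$ yields $R(\theta_1)+R(\theta_2)=\ell^2$, so combined with the orthogonality we obtain the orthogonal decomposition $\ell^2=R(\theta_1)\oplus^\perp R(\theta_2)$, which precisely gives $R(\theta_1)=R(\theta_2)^\perp$.

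The only slightly delicate step is the orthogonality of the ranges: one must remember to handle the complex inner product by replacing $y$ with $iy$ to kill the imaginary part, since the Pythagorean-type identity a priori only yields the vanishing of the real part. Everything else reduces to invoking the already-established results (Propositions~\ref{Prop2.3} and \ref{Prop2.26}) and standard facts about Parseval frames and orthonormal bases.
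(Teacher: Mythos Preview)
Your argument is correct and, up to the point of establishing that $\{x_n\oplus y_n\}_{n\geqslant1}$ is an orthonormal basis and $\{x_n\}$, $\{y_n\}$ are Parseval frames, it coincides with the paper's proof (the paper phrases the specialization $y=0$, $x=0$ as applying the projections $P_1,P_2$ of Proposition~\ref{Prop2.1}, but this is the same idea).

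The only genuine divergence is in the ``in particular'' clause. The paper simply invokes an external result (Theorem~2.9$'$ of Han--Larson~\cite{8}) to conclude $R(\theta_1)=R(\theta_2)^\perp$, whereas you give a direct, self-contained argument: use Proposition~\ref{Prop2.3}(ii) to write $\theta(x\oplus y)=\theta_1(x)+\theta_2(y)$, exploit that $\theta$ is unitary and $\theta_1,\theta_2$ are isometries to force $R(\theta_1)\perp R(\theta_2)$ via the polarization trick, and then use surjectivity of $\theta$ to get $R(\theta_1)+R(\theta_2)=\ell^2$. Your route is more elementary and keeps the paper self-contained; the paper's citation is shorter but defers the content to the literature.
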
	
\begin{proof}
Assume that $\{K^*(x_n)\oplus L^*(y_n)\}_{n\geqslant 1}$ is a $K^*\oplus L^*$-orthonormal basis, then by proposition \ref{Prop2.26}, $K\oplus L$ is a co-isometry. Then for all $x\oplus y\in H_1\oplus H_2$, we have $
\|K^*\oplus L^*(x\oplus y)\|=\|x\oplus y\|.$ Since $\{x_n\oplus y_n\}_{n\geqslant 1}$ is a $K\oplus L$-orthonormal basis, then it is an orthonormal system and a Parseval $K\oplus L$-frame. Thus for all $x\oplus y$ in $ H_1\oplus H_2$, $\|x\oplus y\|^2=\|K^*\oplus L^*(x\oplus y)\|^2 =\displaystyle{\sum_{n=1}^{+\infty}\vert\langle x\oplus y, x_n\oplus y_n\rangle \vert^2}.$ That means that $\{x_n\oplus y_n\}_{n\geqslant 1}$ is a Parseval frame which is also an orthonormal system. Hence $\{x_n\oplus y_n\}_{n\geqslant 1}$ is an orthonormal basis for $H_1\oplus H_2$. Thus $\{x_n\}_{n\geqslant 1}$ and $\{y_n\}_{n\geqslant 1}$ are  Parseval frames for $H_1$ and $H_2$ respectively due to the  proposition \ref{Prop2.1}. Hence $ R(\theta_1)$ and $R(\theta_2)$ are closed subspaces of $\ell^2$. The fact that  $R(\theta_1)=R(\theta_2)^\perp$ in $\ell^2$ follows immediately from  theorem 2.9' in  \cite{8}.

\end{proof}
	\medskip
	\section*{Declarations}
	
	\medskip
	
	\noindent \textbf{Availablity of data and materials}\newline
	\noindent Not applicable.
	
	\medskip

	\noindent \textbf{Competing  interest}\newline
	\noindent The author declares that he has no competing interests.

	\medskip
	
	\noindent \textbf{Fundings}\newline
	\noindent  Author declares that there is no funding available for this article.

	\medskip

	\medskip


\begin{thebibliography}{99}
	\bibitem{1} A. Ahmadi and A. Rahimi, \textit{K-orthonormal and K-Riesz Bases}, Sahand Communications in Mathematical Analysis (SCMA) Vol. 18 No. 1 (2020), 59-72.
	\bibitem{2} P.G. Casazza, \textit{The art of frame theory}, Taiwanese J. Math. 4 (2)(2000) 129-202.
    \bibitem{3} O. Christensen, \textit{An Introduction to Frames and Riesz Bases}, Birkhauser, 2003.	
    \bibitem{4} R.G. Douglas, \textit{On majorization, Factorization and range inclusion of operators on Hilbert space}, proc. Amer. Math. Soc. 17(2)(1966) 413-415.	
    \bibitem{5} Daubechies, I. Grossman, A. Meyer, Y. Painless, \textit{Non orthogonal expansions.} J. Math. Phys. 27, 1271-1283 (1986).
     \bibitem{6} Duffin, R.J. Shaeffer, A.C, \textit{ A class of non harmonic Fourier series.} Trans. Math. Soc. 72, 341-366 (1952).
    \bibitem{7} L. Gavruta, \textit{Frames for operators}, Appl. Comput. Harmon. Anal. 32(2012) 139-144.
	\bibitem{8} D. Han and D. R. Larson, \textit{Frames, bases and group representations}, Memoirs Amer. Math. Soc. 147, no. 697, (2000), 1-91.
	 \bibitem{9} A. Khrennikov and A. Yu, \textit{The Hilbert super space}, Soviet Physics- Doklady, 36 (1991), pp. 759-760.
	\bibitem{10} R. Oliver, \textit{Super Hilbert spaces}, Communications in Mathematical Physics, 214 (2000), pp. 449-467.
     \bibitem{11}  X. Xiao, Y. Zhu and L. Gavruta, \textit{Some properties of K-frames in Hilbert spaces}, Results. Math, 63 (2013), pp. 1243-1255.
    
     

    \end{thebibliography}
\end{document}